\newtheorem{theorem}{Theorem}[section]
\newtheorem{proposition}[theorem]{Proposition}
\newtheorem{lemma}[theorem]{Lemma}
\newtheorem{corollary}[theorem]{Corollary}
\theoremstyle{remark}
\def \R {{\mathbb R}}
\def \Z {{\mathbb Z}}
\def \ba {\begin{array}}
\def \ea {\end{array}}
\def \cF {{\mathcal F}}
\def\one{\rlap{\mbox{\small\rm 1}}\kern.15em 1}
\def \un {\underline}
\def \ov {\overline}
\def \Ll {\left}
\def \Rr {\right}
\def \subset {\subseteq}
\def \emptyset {\varnothing}
\begin{document}

\title{Random walks generated by equilibrium contact processes}

\author[T.\ Mountford and  M.E.\ Vares]{Thomas Mountford\textsuperscript{1} and Maria E.\ Vares\textsuperscript{2}
}
\footnotetext[1]{\'Ecole Polytechnique F\'ed\'erale de Lausanne,
D\'epartement de Math\'ematiques,
1015 Lausanne, Switzerland}
\footnotetext[2]{Instituto de Matem\'atica, Universidade Federal do Rio de Janeiro, Av. Athos da Silveira Ramos 149, CT- Bloco C, Cidade Universit\'aria, 21941-909 Rio de Janeiro, RJ, Brazil}

\begin{abstract}
We consider dynamic random walks where the nearest neighbour jump rates are determined by an underlying supercritical contact process in equilibrium.
This has previously been studied by \cite{dHS} and \cite{dHSS}.  We show the CLT for such a random walk, valid for all supercritical infection rates for the
contact process environment.
\bigskip

\noindent \textsc{MSC 2010:} 60K35, 82C22, 60K37.

\medskip

\noindent \textsc{Keywords:} contact process, interacting particle systems, dynamic random environment, central limit theorem, metastability.

\end{abstract}

\maketitle
%
%
%
%
%
%
%

\section{Introduction}

\noindent In this note we consider a random motion ${(X_{t})}_{t \geq 0}$ in ${\Z}$ generated by a supercritical one dimensional
contact process ${(\xi_{t})}_{t \geq 0}$ in upper equilibrium $\bar \nu$. We suppose that the motion ${(X_{t})}_{t \geq 0}$ performs
nearest neighbour jumps with rate depending on the local values of $\xi_{t}$: there exist $r_0 < \infty$ and functions
$g_{1}$ and $g_{-1}$ that depend only on the spins within $r_0$ of the origin so that for all $t$, $i=\pm 1$,
$$
P(X_{t+h}= X_{t} + i \vert X_{s}, \
\xi_s, s \leq t ) = h g_{i} ( \theta_{X_t}  o \xi_{t}) - o(h)
$$
as $h \to 0$, where $(\theta_y o \xi)(x)=\xi(y+x)$ for all $x,y$.  By contrast, the evolution of process $X$ does not affect that of the contact process $\xi$.\\

{\it Remark:} For simplicity we take $X$ to be a nearest neighbour random walk and also $\xi $
to be a nearest neighbour symmetric contact process.  The approach and result given here
extend without difficulty to random walks whose jumps are finite range (with all jump rates being
appropriate shifts of cylinder functions of $\xi$).  Equally with a bit more care the arguments can be adapted to deal with finite range contact processes.

\noindent Our result is that

\begin{theorem} \label{thm1}
For all $\lambda > \lambda_{c}$ and any non trivial (i.e. non identically zero)  $g_{i}$ as above, there exist $\mu\in \R$ and
$\alpha>0$ so that, as $t \to +\infty$,

$$\frac{X_{t} - \mu t}{\alpha \sqrt{t} } \stackrel{D}{\to} N(0,1).$$
\end{theorem}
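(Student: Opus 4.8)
The plan is to construct a renewal structure for the joint Markov process $(X_t,\xi_t)$ — a sequence of random times $0\le\tau_1<\tau_2<\cdots$ at which the environment is, in a suitable sense, regenerated — so that the increments $\bigl(X_{\tau_{k+1}}-X_{\tau_k},\,\tau_{k+1}-\tau_k\bigr)_{k\ge1}$ form an i.i.d.\ sequence with exponential moments, and then to read off the LLN and the CLT from the corresponding statements for i.i.d.\ sums. The essential difficulty, and the reason the cone-mixing arguments of \cite{dHS,dHSS} do not immediately give the statement for all $\lambda>\lambda_c$, is the complete absence of uniform ellipticity: whenever $\xi_t\equiv 0$ on the window $[X_t-r_0,X_t+r_0]$ one has $g_1=g_{-1}=0$ and the walk is frozen, and since $\lambda$ may be arbitrarily close to $\lambda_c$ these ``dry'' space-time regions may be large — only their probability, not their size, is controlled. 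So the whole point is a quantitative control of these traps that is uniform as $\lambda\downarrow\lambda_c$.

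First I would fix the Harris graphical representation of $\xi$ on $\Z\times\R$ (recoveries at rate $1$, oriented infections at rate $\lambda$), and use that, since $\xi$ has law $\bar\nu$, it is the almost sure limit of the contact process started from ``every site occupied'' at time $-\infty$; monotonicity, additivity and finite speed of propagation of the graphical representation are used throughout, and the walk is run from an independent family of Poisson clocks. I would then isolate the two facts about the supercritical one-dimensional contact process that do the real work and that hold for \emph{all} $\lambda>\lambda_c$: (a) the Bezuidenhout--Grimmett renormalization, by which the restriction of $\xi$ to a large space-time block of side $\asymp L$ dominates a supercritical oriented/independent percolation whose density tends to $1$ as $L\to\infty$, uniformly in $\lambda>\lambda_c$; and (b) the exponential decay in $t$ of the probability that the process started from a single occupied site survives to time $t$ yet is sparse near the origin.

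Next, fix a large scale $L$ and tile $\Z\times\R$ into blocks of side $\asymp L$. Call a block \emph{good} if the graphical representation inside a slightly enlarged copy satisfies the renormalization inequalities \emph{and}, for every spatial window of radius $r_0$ meeting the block and every sub-interval of time of length $\asymp L$, the configuration on that window at some moment of that sub-interval equals a configuration at which some $g_i$ is positive (such a configuration exists because the $g_i$ are non-trivial, and it is produced with overwhelming probability inside a healthy block). By (a), good blocks dominate a supercritical percolation of density close to $1$, so clusters of bad blocks are sparse and have exponentially decaying diameter, uniformly in $\lambda>\lambda_c$. The crucial consequence is a \emph{trap-duration estimate}: as long as $X_t$ lies in a good block it is forced to jump within time $O(L)$, so the time the walk needs to traverse any region is dominated by passage times through the surrounding clusters of bad blocks; combining the exponential tails of those clusters with (b) to bound how long a trapped $r_0$-window can remain dry, one obtains that the time $\sigma_{k+1}-\sigma_k$ between successive exits of $X$ from a block has a tail that is exponential with rate independent of $\lambda>\lambda_c$. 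Turning the static renormalization picture into this genuinely dynamic control of freezing, with all constants uniform in $\lambda$, is the step I expect to be the main obstacle.

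Finally, with traps under control I would build the regeneration times: a time $\tau$ qualifies when $X_\tau$ sits at the past-bottom corner of a good block, the environment on the window around $X_\tau$ that $X$ can reach before leaving (a region of bounded size, by finite speed and the trap estimate) is ``maximal'' and has been created through the graphical representation by an infection path born after $\tau_{k-1}$, and all blocks in a forward space-time cone rooted at $(X_\tau,\tau)$ are already revealed as good or bad. Finite speed of propagation and additivity then make the graphical representation, and hence the future increments of $X$, beyond $\tau$ conditionally independent of the past, so the increments over cycles are i.i.d.\ (after the first); by the trap-duration estimate they have exponential moments and $\E[\tau_2-\tau_1]<\infty$. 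The strong law yields $X_t/t\to\mu:=\E[X_{\tau_2}-X_{\tau_1}]/\E[\tau_2-\tau_1]$, and the CLT for i.i.d.\ sums, together with a routine interpolation between consecutive $\tau_k$ (controlled by the same exponential tails), gives $\frac{X_t-\mu t}{\alpha\sqrt t}\stackrel{D}{\to}N(0,1)$ with $\alpha^2=\Var\bigl(X_{\tau_2}-X_{\tau_1}-\mu(\tau_2-\tau_1)\bigr)/\E[\tau_2-\tau_1]$; non-degeneracy $\alpha>0$ holds because, conditionally on the environment, $X$ still carries the genuine randomness of its Poisson clocks, so the cycle displacements are not a.s.\ constant whenever some $g_i$ is non-trivial. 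That such a $\tau$ exists and that $\E[\tau_2-\tau_1]<\infty$ — i.e.\ that regeneration occurs with positive frequency — is the second delicate point, and it again rests on the uniform bad-cluster tails coming from the renormalization.
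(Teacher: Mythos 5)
There is a genuine gap, and it sits exactly at the step you flag as ``the second delicate point'': your regeneration times do not deliver the claimed conditional independence. At any candidate time $\tau$ the environment is an (approximately) equilibrium contact process configuration on all of $\Z$, whose occupancies are determined by dual paths reaching arbitrarily far back into the past Harris system; and the future increments of $X$ over an unbounded horizon depend on $\xi_\tau$ at sites arbitrarily far from $X_\tau$, since influence propagates at positive speed and you have no a priori bound on how long a cycle lasts. Finite speed of propagation therefore does not decouple ``beyond $\tau$'' from ``before $\tau$''. Moreover, your requirement that ``all blocks in a forward space-time cone rooted at $(X_\tau,\tau)$ are already revealed as good or bad'' cannot be part of the definition of a stopping time without peeking into the future, and if you instead wait until the cone is revealed you bias the law of the post-$\tau$ Harris system, destroying the i.i.d.\ structure you need. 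The Kuczek-style device you invoke (environment near $X_\tau$ ``created by an infection path born after $\tau_{k-1}$'') works for the right edge of a one-sided process, where the relevant environment is generated by the process itself; it does not apply to a walker sitting inside the two-sided equilibrium, whose local configuration is correlated with the infinite past. Consequently the existence of exact i.i.d.\ cycles with exponential moments, on which your LLN, CLT, and formula for $\alpha^2$ all rest, is not established. (A lesser issue: your insistence on constants uniform in $\lambda>\lambda_c$ is unnecessary, since $\lambda$ is fixed in the theorem, and the Bezuidenhout--Grimmett block scale in any case depends on $\lambda$.)

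For comparison, the paper does not attempt exact renewal. It builds, at each scale $2^n$, an \emph{approximate} regeneration: after a successful ``run'' the configuration within distance $n^9$ of the walker is resampled from a finite-horizon approximation $\nu(n)$ of equilibrium (occupancy decided by duals run only for time $\log^4(n)/2$), it is set identically $1$ outside that window, and a coupling lemma (Lemma \ref{noencroach}) shows that a walk driven by the true configuration and one driven by the regenerated configuration coincide over the relevant horizon except on an event of negligible probability. Because the regeneration is only approximate and scale-dependent, $\mu$ and $\alpha^2$ are obtained by a dyadic doubling argument (Lemmas \ref{base}--\ref{basic}, Proposition \ref{var}), positivity of $\alpha$ needs a separate randomization construction (Proposition \ref{positif}) rather than your one-line clock argument, and the CLT is proved by decomposing $[0,t]$ into blocks of length $2^{n_1}$ with $2^{n_1}\approx t^{(1+\beta)/2}$, whose increments are i.i.d.\ up to small-probability corrections, and applying Berry--Esseen plus the maximal estimate of Proposition \ref{during}. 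If you want to salvage your plan, you would need to replace the exact-regeneration step by such a finite-window, finite-horizon resampling with a quantitative coupling, which is precisely the content the proposal is missing.
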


\noindent This result has already been shown for $\lambda$ large in the case of $r_0 =0$, see \cite{dHS} and \cite{dHSS}, by a nice regeneration argument. We exploit in this article the strong regeneration properties of ${(\xi_{t})}_{t \geq 0}$ but in a different way, though we also embed an i.i.d. sequence of r.v.s in our process.  To our knowledge the first central limit theorem for the contact process was due to \cite{GP} who considered the position of the rightmost occupied site for a one sided supercritical contact process.  A beautiful alternative proof was produced by \cite{K} (who wrote his approach explicitly for oriented percolation).  The central limit proof of \cite{dHS} is in this tradition.

\noindent We suppose that the process $\xi$ is generated by a Harris system as is usual. Details will be supplied in the next section.

\noindent The process $X$ is generated by a Poisson process $N^{X}$ of rate $M^\prime  > {\parallel g_{1}\parallel}_{\infty}+ {\parallel g_{-1} \parallel}_{\infty}$ and associated  i.i.d. uniform [0,1] r.v.s ${\{U_{i}\}}_{i \geq 1}$:  if $t \in N^{X}$ is the i'th Poisson point, then a jump from $X_{t-}$ to $X_{t-} -1$ occurs only if $U_{i} \in [0, g_{-1} (\theta_{X_{t-}} o\xi_{t-})/M^\prime]$ and jumps to $X_{t-} +1$ only if $U_{i} \in [1-g_{1} (\theta_{X_{t-}} o\xi_{t-} )/M^\prime, 1].$

\noindent Thus, irrespective of the behaviour of ${(\xi_{t})}_{t \geq 0}$ over a time interval $I$, if $N^{X} \cap I = \emptyset$ then $X$ makes no jumps over time interval $I$.
We now fix throughout the paper $M > M ^\prime$.

In the following we will use the expression dynamic random walk to denote a pair $(\xi,X)$ which evolve according to these stipulated rules.
We will say that a pair $(\xi,X)$ is a piecewise dynamic random walk if it evolves according to the given rules on time intervals
$[\beta_i,\beta_{i+1})$ with $\beta_i$ increasing to infinity, but may have a global jump in the pair $(\xi,X)$ at times $\beta_i$.
This will be clear in Section \ref{sec-5}.

\section{A reminder on the contact process}
\label{s:remind}
\setcounter{equation}{0}
The contact process with parameter $\lambda > 0$ on a connected graph $G = (V, E)$ is a continuous-time Markov process $(\xi_t)_{t \geq 0}$ with state space $\{0,1\}^V$ and generator
\begin{equation}  \label{eq1gen}
\Omega f(\xi) = \sum_{x \in V} \left(f(\phi_x\xi) - f(\xi) \right) + \lambda \cdot \sum_{e\in E}  \left(f(\phi_e\xi) - f(\xi) \right),
\end{equation}
where $f$ is any local function on $\{0,1\}^V$ and, given $x \in V$ and $\{y, z\} \in E,$ we define $\phi_x\xi,\; \phi_{\{y,z\}}\xi \in \{0,1\}^V$ by
$$\phi_x\xi(w) = \left|\begin{array}{ll}0 &\text{if } w = x;\\\xi(w)&\text{otherwise;} \end{array}\right.\qquad \phi_{\{y,z\}}\xi(w) = \left|\begin{array}{ll}\max(\xi(y),\xi(z))&\text{if } w \in \{y,z\};\\\xi(w) &\text{otherwise.} \end{array} \right.$$
Given $A \subset V$, we write $(\xi^A_t)_{t \ge 0}$ to denote the contact process started from the initial configuration that is equal to 1 at vertices of $A$ and 0 at other vertices. When we write $(\xi_t)$, with no superscript, the initial configuration will either be clear from the context or unimportant. We often abuse notation and identify configurations $\xi \in \{0,1\}^V$ with the corresponding sets $\{x\in V: \xi(x) = 1\}$.

The contact process is a model for the spread of an infection in a population. Vertices of the graph (sometimes referred to as \textit{sites}) represent individuals. In a configuration $\xi \in \{0,1\}^V$, individuals in state 1 are said to be \textit{infected}, and individuals in state 0 are \textit{healthy}. Pairs of individuals that are connected by edges in the graph are in proximity to each other in the population. The generator (\ref{eq1gen}) gives two types of transition for the dynamics. First, infected individuals \textit{heal} with rate 1. Second, given two individuals in proximity so that one is infected and the other is not, with rate $\lambda$ there occurs a \textit{transmission}, as a consequence of which both individuals end up infected.

The configuration $\underline{0} \in\{0,1\}^V$ that is equal to zero at all vertices is a trap for $(\xi_t)$. For certain choices of the underlying graph $G$ and the parameter $\lambda$, it may be the case that the probability of the event $\{\underline{0} \text{ is never reached}\}$ is positive even if the process starts from finitely many infected sites. In fact, whether or not this probability is positive does not depend on the set of initially infected sites, as long as this set is nonempty and finite. We say that the process \textit{survives} if this probability is positive; otherwise we say that the process \textit{dies out}.  Survival or not depends on the value of the parameter $\lambda $.  As is intuitive, there is a value $\lambda_c$ (depending on $G$) so that there is survival above $\lambda_c$  and nonsurvival below.

We now recall the graphical construction of the contact process and its self-duality property. Fix a graph $G = (V,E)$ and $\lambda > 0$. We take the following family of independent Poisson point processes on $(- \infty,\infty)$:
$$\begin{array}{ll}
(D^x): x \in V &\text{with rate } 1;\\
(N^e): e \in E &\text{with rate } \lambda.\end{array}$$
Let $H$ denote a realization of all these processes. Given $x,y\in V,\; s \leq t$, we say that $x$ and $y$ are connected by an \textit{infection path in $H$} (and write $(x,s)\leftrightarrow (y,t)$ in $H$) if there exist times $t_0 = s < t_1 < \cdots < t_k = t$ and vertices $x_0 = x, x_1, \ldots, x_{k-1} = y$ such that
\begin{itemize}
\item[$\bullet$] $D^{x_i} \cap (t_i,\; t_{i+1}) = \emptyset$ for $i = 0, \ldots, k - 1$;
\item[$\bullet$] $\{x_i,x_{i+1}\}\in E$ for $i = 0, \ldots, k-2$;
\item[$\bullet$] $t_i \in N^{x_{i-1}, x_i}$ for $i = 1, \ldots, k-1$.
\end{itemize}
Such a collection will be called a {\it path} from $(x,s)$ to $(y,t)$  (here and elsewhere, we drop the dependence on $H$ if a Harris system is given).
Points of the processes $(D^x)$ are called \textit{death marks} and points of $(N^e)$ are \textit{links}; infection paths are thus paths that traverse links and do not touch death marks. $H$ is called a \textit{Harris system}; we often omit dependence on $H$. For $A, B \subset V$, we write $A\times\{s\} \leftrightarrow B \times \{t\}$ if $(x,s)\leftrightarrow (y,t)$ for some $x \in A$, $y \in B$. We analogously write $A\times \{s\} \leftrightarrow (y,t)$ and $(x,s) \leftrightarrow B\times\{t\}$. Finally, given set $C \subset V \times (- \infty,\infty)$, we write $A \times \{s\} \leftrightarrow B \times \{t\}$ \textit{inside} $C$ if there is an infection path from a point in $A\times \{s\}$ to a point in $B\times\{t\}$ which is entirely contained in $C$.

Given $A \subset V$, put
\begin{equation}\label{eq1harris} \xi^A_t(x) = \mathds{1}_{\{A \times \{0\} \leftrightarrow (x,t)\}} \text{ for } x \in V,\; t \geq 0\end{equation}
(here and in the rest of the paper, $\mathds{1}$ denotes the indicator function). It is well-known that the process $(\xi^A_t)_{t\geq 0} = (\xi^A_t(H))_{t\geq 0}$ thus obtained has the same distribution as that defined by the infinitesimal generator (\ref{eq1gen}).  The advantage of (\ref{eq1harris}) is that it allows us to construct in the same probability space versions of the contact processes with all possible initial distributions. From this joint construction, we also obtain the \textit{attractiveness} property of the contact process: if $A \subset B \subset V$, then $\xi^A_t(H) \subset \xi^B_t(H)$ for all $t$. From now on, we always assume that the contact process is constructed from a Harris system.
In discussing dynamic random walks, it will be understood that the Poisson process $N^X$
and associated uniform random variables also are part of the Harris system.

Now fix $A \subset V,\; t  \in \R$ and a Harris system $H$. Let us define the \textit{dual process} $(\hat \xi^{A, t}_s)_{0 \leq s <  \infty}$ by
$$\hat \xi^{A,t}_s(y) = \mathds{1}_{\{(y,t-s)\leftrightarrow A \times \{t\} \text{ in } H\}}.$$
If $A = \{x\}$, we write $(\hat \xi^{x,t}_s)$.
This process satisfies two important properties. First, the distribution of $(\hat \xi^{x,t}_{s}$ $s  \geq 0$) is the same as that of a contact process with same initial configuration. Second, it satisfies the \textit{duality equation}
\begin{equation}\xi^A_t \cap B \neq \emptyset \text{ if and only if } A \cap \hat \xi^{B,t}_t \neq \emptyset. \end{equation}
In particular,
\begin{equation}\xi^{\underline 1}_t(x) = 1 \text{ if and only if } \hat \xi^{x,t}_t \neq \emptyset, \end{equation}
where $(\xi^{\underline 1}_t)$ is the process started from full occupancy.

Also (for $\lambda>  \lambda_c $) if we put $\xi_0(x) \ = \ 1$ if and only if $\hat \xi^{x, 0}_.$ never dies out, then configuration $\xi_0 $ has the upper equilibrium distribution.

\medskip
We will talk of a contact process $\{\xi_t\}_{t\geq 0}$ restricted to $ R \subset V \times \R $ to mean the contact process generated by Harris system paths that are entirely contained in $R$.
This is interpreted to signify that $\xi_t(x) = 0$ for each $(x,t) \notin R$.   We remark that if $R_1 $ and $R_2 $ are disjoint, then conditional upon initial configurations, two contact processes
restricted respectively to $R_1 $ and $R_2$ are independent.   When necessary we use the notation
$\xi^{R}$ or $\xi^{A,R}$ to denote contact processes restricted to space time regions $R$, with $A$ standing for the initial configuration.

We use the suffix $t$ to denote contact processes run from time $t$.

From now on we will consider the supercritical contact process ($\lambda >\lambda_c$)  on
the integer lattice, $V = \Z$ with $E$ the set of nearest neighbour edges.

We now recall classical results about the contact process on the line. The proposition
below can be found in \cite{DG} or Theorem 3.23, chapter VI of \cite{lig85}. 
\begin{proposition}
\label{cpinterval}
There exists a constant $c_1 \in (0, \infty ) $ so that for $\tau $ the stopping
time equal to the first hitting time of $\underline 0 $
for the process, we have
$$
(i) \qquad \quad \! P^{\xi_0}( \tau < \infty ) \ < \ \frac{1}{c_1} e^{-c_1 \sum_x \xi_0(x)}
$$
and
$$
(ii) \qquad \quad P^{\xi_0}( t < \tau < \infty ) \ < \ \frac{1}{c_1} e^{-c_1 t},
$$
for any configuration $\xi_0$.
\end{proposition}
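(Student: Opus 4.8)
The plan is to deduce both bounds from a comparison of the supercritical contact process with a supercritical oriented percolation whose parameter $p$ can be taken as close to $1$ as we like; this is classical on the line and is carried out in \cite{DG} and in \cite[Ch.\ VI]{lig85}. Concretely, I would first invoke the block (renormalization) argument: since $\lambda > \lambda_c$ there are space and time scales $L,T<\infty$ and an increasing, finite-range functional of the Harris system that declares each site $(m,k)$ of the renormalized lattice good or bad, in such a way that (a) the good sites dominate independent oriented percolation with parameter $p=p(L,T)\uparrow 1$ as $L,T\to\infty$; (b) if $\xi_0$ occupies at least, say, half of a length-$L$ column then the renormalized site directly above it is good with probability $\ge p$; (c) extinction of the contact process forces extinction of the good-site percolation; and (d) a contact process still alive at a time $s$ has, outside an event of probability $\le e^{-cs}$, already produced a good renormalized site by time $s/2$.

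Next I would record the two facts about supercritical oriented percolation with $p$ close to $1$ that carry the analytic load, both proved by the usual Peierls/contour estimate: starting from $N$ consecutive wet sites, the probability of eventual extinction is $\le e^{-cN}$; and starting from a single wet site, the probability that the cluster reaches level $n$ but is nevertheless finite is $\le e^{-cn}$ (in each case a finite cluster is surrounded by a closed dual contour of length $\ge cN$, resp.\ $\ge cn$, and for $p$ near $1$ the number of such contours is overwhelmed by their weight). Then (i) follows: if $\xi_0$ has $n$ infected sites, a short case analysis on whether $\xi_0$ is concentrated in few columns or spread over many — the latter case handled after first running the process for a fixed time — shows via (b) that the dominating percolation is started from $\ge cn$ wet sites, and (c) together with the first contour estimate gives $P^{\xi_0}(\tau<\infty)\le e^{-c'n}$. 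For (ii), using $\xi^{\xi_0}_t=\bigcup_{x\in\xi_0}\xi^{\{x\}}_t$ and $\{\xi^{\xi_0}\text{ dies out}\}\subseteq\bigcap_{x\in\xi_0}\{\xi^{\{x\}}\text{ dies out}\}$ one reduces to the single initial site $\xi_0=\{0\}$, the factor $\sum_x\xi_0(x)$ being absorbed by (i) once it is large compared with $e^{ct}$ (and (ii) being trivial for small $t$ upon taking $c_1$ small); for $\xi_0=\{0\}$, on $\{t<\tau<\infty\}$ property (d) furnishes a good renormalized site by time $t/2$ outside an event of probability $\le e^{-ct}$, and then (c) and the second contour estimate bound the chance that the dominating percolation, alive at level $\sim t/(2T)$, is finite by $e^{-ct/(2T)}$. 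Collecting the estimates and shrinking the constant yields both displays.

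I expect the only substantive obstacle to be the block construction with exactly the linking properties (b)--(d) — in particular arranging that the number of infected sites of $\xi_0$ is faithfully transmitted to the number of wet sites seen by the dominating percolation, so that the exponent $\sum_x\xi_0(x)$ in (i) is genuinely recovered, and arranging (d) so that survival of the contact process without the renormalized good pattern costs an exponential factor. Once the renormalization is in place the oriented-percolation inputs are routine and the bookkeeping above is elementary; this is why, in practice, one simply quotes \cite{DG} or \cite{lig85}.
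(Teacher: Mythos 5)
Your outline for (i) is workable in the standard way (each occupied length-$L$ column seeds a wet renormalized site with probability bounded below, independently across columns via disjoint space--time boxes; since a column carries at most $L$ particles, the number of occupied columns is at least $\sum_x\xi_0(x)/L$, so the Peierls bound returns the exponent $\sum_x\xi_0(x)$). The genuine gap is in (ii), at the step where you pass from ``a good renormalized site has been produced by time $t/2$'' and ``the process dies out'' to ``the dominating percolation is alive at level $\sim t/(2T)$ but finite'' and then invoke the contour estimate. The block comparison is one-sided: an infinite cluster of good blocks forces survival of the contact process (your (c)), but survival of the contact process up to time $t$ does \emph{not} force the cluster seeded by that good block to reach level $t/(2T)$ --- the process can stay alive while remaining sparse and never producing a connected chain of good blocks, and this is exactly the scenario whose probability (ii) is meant to control. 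On $\{t<\tau<\infty\}$ all that (c) gives you is that the seeded cluster is finite, an event of probability bounded away from $0$, not $e^{-ct}$. Your property (d) in the weak form you state it (one good block by time $s/2$; provable by a simple restart argument) is therefore insufficient, and the strong form you would actually need (``alive at time $t$ forces order $t/T$ levels of wet renormalized sites, outside probability $e^{-ct}$'') is a linear-growth estimate of the same depth as (ii) itself, so as written the argument is close to circular.

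For comparison, the paper does not reprove this proposition: part (i) is quoted from Durrett--Griffeath and from Theorem VI.3.23 of Liggett, and part (ii) is deduced from (i) together with Lemma \ref{lemreg} (the Durrett--Schonmann-type bound that a process alive at time $t$ has at least $h_1t$ occupied sites on each side within distance $t$, outside probability $h_1^{-1}e^{-h_1t}$): on $\{t<\tau<\infty\}$ either that growth event fails, at exponential cost, or one applies the Markov property at time $t$ and part (i) to the at least $h_1t$ occupied sites. If you want a self-contained renormalization proof, you must insert a quantitative growth input of this type (your single-site reduction and the small-$t$/large-$|\xi_0|$ bookkeeping are fine and can be kept); a single good block plus the contour estimate cannot do the job.
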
 
One important consequence of (ii) above (indeed of the slightly weaker version 
when $\xi_0$ has only one occupied site) is the fact that if instead of considering as 
occupied (at a given time) sites whose dual survives forever, we consider sites whose dual survives to large time $t$, 
then the resulting configuration has a distribution very close to equilibrium. 

We have that a contact process $ (\xi^{x}_t)_{t \geq 0} $ has exponentially small (in $t$) chance of
surviving until time $t$ but subsequently dying out.  Furthermore
by considering large deviations of the rightmost descendant $r^{x}_t $ and leftmost
descendant $l^{x}_t $ (see \cite{DSch} and Theorem 3.23, of \cite{lig85}), we have that
\begin{lemma}
\label{lemreg}
There exists $h_1 > 0 $ so that
$$
P(H(t))  < \frac{1}{h_1} e^{-h_1t}
$$
for the event $H(t)$: $\xi^{x}_t \ne \emptyset $ but either
    $ | \xi^{x}_t \cap (x,x+t ) | < h_1t$ or
    $ | \xi^{x}_t \cap (x-t,x ) | < h_1t,$
where  $|\cdot|$ refers to the cardinality.
\end{lemma}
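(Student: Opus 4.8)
The plan is to deduce Lemma~\ref{lemreg} from Proposition~\ref{cpinterval}(ii) together with known large-deviation bounds for the extremal descendants $r^x_t$ and $l^x_t$ of a single-site contact process, conditioned on survival. By symmetry of the nearest-neighbour contact process it suffices to bound the probability of the event
$$
H^+(t): \ \xi^x_t \neq \emptyset \ \text{ but }\ |\xi^x_t \cap (x,x+t)| < h_1 t,
$$
since the analogous bound for $H^-(t)$ follows by reflection, and then one takes the minimum of the two constants and uses a union bound to absorb the factor $2$ into a slightly smaller $h_1$.

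The key steps, in order. First, recall the shape theorem / large deviation estimates for the one-sided process: there is a speed $v = v(\lambda) > 0$ and constants $c,c' > 0$ such that, on the event of survival, $P(r^x_t < (v-\varepsilon)t,\ \xi^x_t \neq \emptyset) \le c^{-1} e^{-ct}$ and, more to the point, the process restricted to the half-line to the right of $x$ also survives and advances linearly; see \cite{DSch} and Theorem~3.23 of \cite{lig85}. Second, on the event that $\xi^x_t$ survives and its rightmost point has advanced to distance roughly $vt$, I want to say the occupied set inside $(x, x+t)$ is linearly large. The cleanest route is a \emph{block/renormalization} argument run backwards from the right front: with probability exponentially close to $1$, along the ``space-time cone'' traced out by the rightmost descendant there is, at time $t$, a density-bounded-below set of occupied sites in $(x, x+t)$ — equivalently, couple with a supercritical oriented percolation process whose wetted region at the final level has positive density on the relevant interval, and invoke the standard exponential estimate that the density deficit event has exponentially small probability. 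Third, assemble: write $P(H^+(t))$ as a sum over the finitely many ``bad'' possibilities (front too slow; front fast but interior too sparse; the whole process dies after surviving to an intermediate time), bound each by $\mathrm{const}\cdot e^{-\mathrm{const}\cdot t}$ using Proposition~\ref{cpinterval}(ii) for the ``dies later'' contribution and the percolation estimates for the rest, and choose $h_1$ smaller than all the constants produced and smaller than the density lower bound.

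I expect the main obstacle to be the second step: turning ``the front has advanced linearly'' into ``the interior is linearly occupied'' with an exponential tail on the failure probability, uniformly in the starting configuration being a single site. One has to be slightly careful because $\xi^x_t \cap (x,x+t)$ is an interior count, not a front statistic, so a direct appeal to $r^x_t$ large deviations is not enough; the honest fix is to use that, conditioned on survival, the process dominates (after a bounded burn-in time, which costs only a constant factor) a supercritical oriented-percolation ``wedge'' emanating from near $x$, and that for such a wedge the event of having fewer than $h_1 t$ wet sites at level $\sim t$ within the cone has probability $\le h_1^{-1} e^{-h_1 t}$ by the exponential estimates in \cite{DSch}. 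Everything else — the reflection symmetry, the union bound, the reduction of the ``survives to $t$ then dies'' case to Proposition~\ref{cpinterval}(ii) — is routine bookkeeping, and the final constant $h_1$ is obtained by taking a minimum of finitely many positive constants.
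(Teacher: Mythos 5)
Your proposal is correct and follows essentially the route the paper itself intends: the paper gives no written-out proof of Lemma \ref{lemreg}, justifying it by the large-deviation estimates for the rightmost and leftmost descendants in \cite{DSch} and Theorem 3.23 of \cite{lig85}, and your symmetry reduction, right-edge large deviations, and renormalization/oriented-percolation step for the interior density are the standard way of filling that in. One small caution: since the paper's remark derives Proposition \ref{cpinterval}(ii) \emph{from} Lemma \ref{lemreg}, when you invoke \ref{cpinterval}(ii) for the ``survives to $t$ then dies'' contribution you should rely on its external sources (\cite{DG}, \cite{lig85}) rather than on the paper's internal derivation, to avoid an apparent circularity.
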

{\it Remark:} The second statement in Proposition \ref{cpinterval} follows at once from the first and the above lemma.

The classical renormalization argument that compares the contact process with supercritical oriented percolation,
see for instance the proof of \cite[Corollary~VI.3.22]{lig85} and classical contour arguments
for oriented percolation (see e.g. \cite{Du1}) give the following

\begin{lemma}
\label{hit}
Given $\gamma $ and $ \theta $ both strictly greater than $\beta$, there exists constant $c_2$
so that for $( \xi_t: t \geq 0)$ the contact process restricted to rectangle $[0,L] \times [0,T]$, one has: if\\
(i) $\xi_0 $ has no gaps of size $L^ \beta $\\
 and
(ii) the dual $\hat {\xi}^{x,T} _{T- L^\gamma} $ has cardinality at least $L^{\theta}$,\\
then the conditional probability that $\xi_T(x) = 1 $ is at least
$$
1-e^ {-(c_2L^{\theta - \beta})}
$$
for $L$ sufficiently large.
\end{lemma}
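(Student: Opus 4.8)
The plan is to pass to the graphical representation, use duality to turn $\{\xi_T(x)=1\}$ into the question whether the forward infection from $\xi_0$ meets the dual set $\hat\xi^{x,T}_{T-L^\gamma}$ at time $L^\gamma$, and then exploit that this dual set, having at least $L^\theta$ points, contains many well‑separated points, each of which is hit with probability bounded below by a constant, these events being independent because they are supported on disjoint space‑time boxes.

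Concretely, write $\eta=\xi_0$ and $S=\hat\xi^{x,T}_{T-L^\gamma}$. Cutting an infection path inside $[0,L]\times[0,T]$ at time $L^\gamma$ shows that $\xi_T(x)=1$ if and only if $\xi^{\eta}_{L^\gamma}\cap S\neq\emptyset$, where $\xi^\eta$ denotes the forward process restricted to $[0,L]$. Since $S$ is a function of the Harris marks in $[0,L]\times(L^\gamma,T]$ whereas $\xi^\eta_{L^\gamma}$ is a function of the independent marks in $[0,L]\times[0,L^\gamma)$, it suffices (treating $S$ as a fixed set) to prove: for every $\eta$ with no gap of size $L^\beta$ and every $S'\subseteq[0,L]$ with $|S'|\ge L^\theta$, one has $P(\xi^\eta_{L^\gamma}\cap S'\neq\emptyset)\ge 1-e^{-c_2L^{\theta-\beta}}$. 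Now a greedy scan of $S'$ from left to right, each time picking the leftmost remaining point and then discarding the points of $S'$ in the next $3L^\beta$ positions to its right, produces $S''\subseteq S'$ whose points are pairwise at distance $\ge 3L^\beta$ and with $|S''|\ge c_3L^{\theta-\beta}$ for a constant $c_3>0$ (each chosen point costs at most $3L^\beta+1$ points of $S'$). For $z\in S''$ put $W_z=[z-L^\beta,z+L^\beta]\cap[0,L]$; these windows are pairwise disjoint, each has length $\ge L^\beta$, hence each contains a site $w_z$ of $\eta$ by hypothesis (i), with $|z-w_z|\le L^\beta$. If there is an infection path from $(w_z,0)$ to $(z,L^\gamma)$ lying inside $W_z\times[0,L^\gamma]$, then $z\in\xi^\eta_{L^\gamma}\cap S''$; and for distinct $z\in S''$ these events are independent, being measurable with respect to the Harris system restricted to disjoint space‑time boxes. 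Writing $\delta>0$ for a lower bound on their probabilities,
$$
P\big(\xi^\eta_{L^\gamma}\cap S'=\emptyset\big)\le P\big(\xi^\eta_{L^\gamma}\cap S''=\emptyset\big)\le(1-\delta)^{|S''|}\le e^{-\delta c_3L^{\theta-\beta}},
$$
so the lemma follows with $c_2=\delta c_3$.

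The substantive point — and the step I expect to be the main obstacle — is the uniform lower bound $\delta>0$: one must show that a contact process started from the single site $w_z$, run inside the interval $W_z$ (of length between $L^\beta$ and $2L^\beta+1$) for time $L^\gamma$, occupies the prescribed site $z\in W_z$ with probability at least $\delta$, uniformly in $L$ large and in the positions of $z$ and $w_z$ within $W_z$. This is exactly where the renormalization comparison with supercritical oriented percolation and the metastability of the one‑dimensional supercritical contact process are needed: (a) for $\lambda>\lambda_c$ the process from $w_z$ survives inside $W_z$ up to time $L^\gamma$ with probability bounded below by a positive constant, since the survival probability on a long interval is uniformly positive; (b) because $L^\beta\ll L^\gamma\ll e^{cL^\beta}$, the time $L^\gamma$ lies in the metastable regime, so conditionally on survival the configuration at time $L^\gamma$ is close to the quasi‑stationary measure on $W_z$ (using Proposition~\ref{cpinterval} and Lemma~\ref{lemreg}); and (c) the density of that measure at any fixed site is bounded below by a constant that is uniform in the interval and in the site, including sites near the endpoints (see \cite{lig85} and \cite{DG}). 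Assembling (a)--(c) yields $\delta$, and hence $c_2$. The only real work beyond invoking these classical facts is to extract the constants in (a)--(c) uniformly, in particular to handle the boundary cases where $z$ or $w_z$ sits close to an endpoint of $W_z$; the greedy extraction and the independence via disjoint boxes are routine.
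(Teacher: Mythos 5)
Your reduction is, in outline, the intended argument: the paper gives no written proof of Lemma \ref{hit}, only the remark that it follows from the classical renormalization comparison with supercritical oriented percolation together with contour arguments (see \cite{lig85}, \cite{Du1}), and your skeleton --- cut the rectangle at level $L^\gamma$, use independence of the Harris marks below and above that level, extract at least $cL^{\theta-\beta}$ points of the dual set that are $3L^\beta$-separated, give each its own tube of width about $L^\beta$ containing a seed of $\xi_0$ by the no-gap hypothesis, and multiply independent failure probabilities over disjoint tubes --- is exactly how the exponent $L^{\theta-\beta}$ arises. The bookkeeping (the duality cut at time $L^\gamma$, measurability and independence with respect to disjoint space-time regions, the windows truncated at $0$ or $L$ still having length at least $L^\beta$ so that they contain a seed) is correct.

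The place where the write-up claims more than it delivers is the uniform constant $\delta$, and this is not a side issue: it is the entire content that the paper attributes to the renormalization and contour machinery. As stated, your step (b) asserts that, conditionally on survival, the tube process at time $L^\gamma$ is close to the quasi-stationary measure ``using Proposition \ref{cpinterval} and Lemma \ref{lemreg}'', but those two results concern the unrestricted process on $\Z$ (extinction tails and linear spread); they give neither convergence to quasi-stationarity on an interval whose length, of order $L^\beta$, grows with $L$, nor a lower bound on the density of that measure at a prescribed site, possibly adjacent to an endpoint of $W_z$, at the deterministic time $L^\gamma$. The estimate you need is true, but to obtain it uniformly in $L$, in the position of the seed $w_z$, and in the target $z$ (both possibly at the edge of the tube), one runs the block construction inside $W_z\times[0,L^\gamma]$: the seeded macroscopic oriented percolation survives under confinement with probability bounded below (here one uses $L^\gamma \ll e^{cL^\beta}$), it spans the width of the tube after time $O(L^\beta)\ll L^\gamma$, and contour/density estimates for supercritical oriented percolation (\cite{Du1}, or the finite-box statements in \cite{DSch}) provide an open block in the column of $z$ at the top level, from which a constant-probability local connection reaches $(z,L^\gamma)$. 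So your outer argument is complete, but the inner single-tube estimate is precisely the cited classical argument and cannot be discharged by the in-paper Proposition \ref{cpinterval} and Lemma \ref{lemreg} alone; as written, that step is a gap in the proposal rather than a routine citation.
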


Similarly we arrive at
\begin{proposition}
\label{cprect} Given finite positive $K$, there exists a constant $c_3 = c_3(K) \in (0 , \infty )$ so that for $l$ sufficiently large if $\xi^R$ is a contact process restricted to
$[0,Kl] \times [0,l]$ so that $\xi^R_0 $ has no vacant subinterval of $[0,Kl]$ of length $v$, then
$$
P( \xi^R_l \equiv 0 \mbox{ on an interval } I \subset [0,Kl] \mbox{ with } |I| \geq u] \ \leq \
\frac{Kl}{c_3} ( e^{-c_3u} + e^{-c_3l/v}).
$$
\end{proposition}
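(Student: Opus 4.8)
The plan is to deduce Proposition~\ref{cprect} from Lemma~\ref{hit} by a union bound over the points $x \in [0,Kl]$ where $\xi^R_l$ might be $0$, together with a control of the dual processes that guarantees hypothesis (ii) of Lemma~\ref{hit} holds at most such points. Concretely, fix parameters $\beta < \gamma < \theta$ as in Lemma~\ref{hit} with $v = l^{\beta}$ playing the role of the maximal gap size, $l^\gamma$ the dual running time, and apply that lemma with $L = l$ (so that $L^\gamma = l^\gamma \ll l$ for $\gamma < 1$, which we may assume). For a given $x$, consider the dual process $\hat\xi^{x,l}_s$ run for time $s = l^\gamma$ inside the rectangle. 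Either this dual dies out before time $l^\gamma$, or it survives to time $l^\gamma$; in the latter case, by the large-deviation bound behind Lemma~\ref{lemreg} (applied to the dual, which has the law of a contact process), the probability that $\hat\xi^{x,l}_{l^\gamma}$ has fewer than $h_1 l^\gamma \geq l^\theta$ occupied sites is at most $\tfrac{1}{h_1}e^{-h_1 l^\gamma}$, and for $\gamma$ chosen close enough to $1$ and $l$ large this is superpolynomially small in $l$, certainly $\leq e^{-c l^{\gamma}}$; note $l^\gamma \gg l/v = l^{1-\beta}$ is not automatic, so one actually wants to choose the exponents so that the weakest of the various exponential rates is identified.

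The second ingredient is the event that a \emph{long} vacant interval $I$ with $|I| \geq u$ appears at time $l$. For each left endpoint $x$ of such an interval, $\xi^R_l$ is $0$ on $[x, x+u]$. For this to happen despite $\xi^R_0$ having no gap of length $v$, either (a) for some site $y \in [x,x+u]$ the dual $\hat\xi^{y,l}$ died out by time $l^\gamma$ — but dying out that quickly, starting from a configuration inherited at time $l - l^\gamma$ from a process that at time $0$ had no gap of size $v$, is an event whose probability we bound using Proposition~\ref{cpinterval}(ii) (the dual restricted to the rectangle behaves, away from the boundary, like an honest supercritical contact process, and a process that has not died out by a large time has small probability of dying at all), giving a bound like $e^{-c_1 l^\gamma}$ per site, hence $\leq u \, e^{-c_1 l^\gamma}$ over the interval; or (b) all the duals from $[x,x+u]$ survive to time $l^\gamma$ with large cardinality but \emph{still} fail to infect their source points at time $l$, which by Lemma~\ref{hit} costs at most $e^{-c_2 l^{\theta-\beta}}$ per site and these events are what contribute the $e^{-c_3 u}$ term after multiplying the per-site failure probability across the $u$ sites of $I$ and summing over the $\leq Kl$ possible left endpoints $x$. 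The boundary effects — sites within $l^\gamma$ of $0$ or $Kl$, where the restriction to the rectangle genuinely differs from the free process — affect only $O(l^\gamma)$ sites and can be absorbed into the prefactor $Kl/c_3$ and the $e^{-c_3 l/v}$ term.

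Assembling: the probability in question is at most
$$
\sum_{x=0}^{Kl} P\bigl(\xi^R_l \equiv 0 \text{ on } [x, x+u]\bigr) \ \leq \ Kl \cdot \Bigl( u\, e^{-c_1 l^\gamma} + e^{-c l^{1-\beta}} + e^{-c_2 u\, l^{\theta-\beta}}\Bigr),
$$
and after relabelling constants and using $l^{1-\beta} = l/v$, $l^\gamma \geq l/v$ (choosing $\gamma \geq 1-\beta$), and $u\, l^{\theta-\beta} \geq c' u$ for large $l$, this is dominated by $\tfrac{Kl}{c_3}(e^{-c_3 u} + e^{-c_3 l/v})$ for a suitable $c_3 = c_3(K)$.

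I expect the main obstacle to be the \textbf{bookkeeping of the exponents and the boundary of the rectangle} rather than any deep probabilistic input: one must choose $\beta < \gamma < \theta < 1$ (and decide where $v = l^\beta$ sits) so that all three exponential rates — the dual-death rate from Proposition~\ref{cpinterval}(ii), the "survives but doesn't grow" rate from Lemma~\ref{lemreg}, and the "grows but doesn't infect" rate $l^{\theta-\beta}$ from Lemma~\ref{hit} — are simultaneously at least of order $\min(u, l/v)$, and one must verify that the truncation of the dual to the spatial window $[0,Kl]$ does not ruin the comparison with the free contact process (it does not, because a dual starting from $\geq l^\theta$ sites deep inside a window of width $\gg l^\gamma$ will, with the stated high probability, neither reach the window's edge nor care about it during the $l^\gamma$ time units of relevance — this is exactly the content one extracts from the rightmost/leftmost descendant large deviations cited before Lemma~\ref{lemreg}).
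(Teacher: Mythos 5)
Your route --- duality plus Lemma \ref{hit} plus a union bound over left endpoints --- is not the one the paper takes (the paper deduces Proposition \ref{cprect}, like Lemma \ref{hit}, from the classical renormalization comparison with supercritical oriented percolation together with a contour argument, and gives no further details), and as written your argument has a genuine gap exactly where the $e^{-c_3 u}$ term is produced. You obtain that term ``after multiplying the per-site failure probability across the $u$ sites of $I$''. But the events $\{\xi^R_l(y)=0\}$, $y \in I$, are all functions of one and the same Harris system and are strongly positively correlated, so no product of per-site probabilities is available; worse, the dominant reason a single site $y$ is vacant is that its dual $\hat\xi^{y,l}$ simply dies out before reaching time $0$, an event whose probability is bounded below by a positive constant (one minus the survival probability), not exponentially small. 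Your case (a) tries to dismiss this with Proposition \ref{cpinterval}(ii), but that bounds \emph{late} extinction (surviving to time $t$ and then dying), not early extinction, and the phrase about the dual ``starting from a configuration inherited at time $l-l^\gamma$'' is not meaningful: the dual starts from the single point $(y,l)$ and its law does not see the forward initial condition. Moreover ``the dual dies'' is not a bad event to be excluded --- it is precisely (part of) the event you are trying to bound. The exponential decay in $u$ must come from a joint argument for all $u$ duals at once, for instance via self-duality: all duals from $I\times\{l\}$ failing to reach time $0$ inside the rectangle means the restricted forward process started from $I$ occupied dies by time $l$, which Proposition \ref{cpinterval}(i) (plus control of the spatial restriction and boundary) bounds by roughly $e^{-c_1 u}$ --- or from the renormalization/contour argument the paper actually invokes. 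Without such a step the $e^{-c_3 u}$ term is unsupported.

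Two secondary problems. First, you fix $v = l^{\beta}$, but the proposition must hold for an arbitrary gap scale $v$, and the term $e^{-c_3 l/v}$ is exactly the $v$-dependent part of the bound; your bookkeeping with exponents $\beta<\gamma<\theta$ tied to fixed powers of $l$ does not produce it for general $v$. This is where one needs either blocks at a scale comparable to $v$, or a connection step consisting of order $l/v$ (or at least $\min(u,l/v)$) attempts in disjoint space--time strips, whose independence comes from the restriction property of the Harris system. Second, hypothesis (ii) of Lemma \ref{hit} concerns the dual $\hat\xi^{x,T}_{T-L^\gamma}$, i.e.\ the dual run for duration $T-L^{\gamma}$ down to real time $L^{\gamma}$ near the bottom of the rectangle, not a dual run only for duration $l^{\gamma}$ as you use it; to apply the lemma you would still have to carry the dual through the bulk of the rectangle, keeping it large and inside $[0,Kl]$, which is an additional argument rather than a relabelling of the time variable.
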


In particular we have the following.

\begin{corollary}  \label{good1}
There exists a constant $c_4 \in (0,+\infty)$ so that for all $n$ large,  if $x \in (-n^2 2^n ,n^2 2^n ) $
 and $\xi_t $ is a configuration with no $n^{3/2} $
vacant intervals on $[-2n^2 2^n ,2n^2 2^n ]$, then outside a set of
probability at most $n^9 e^{-c_4 \log^{3/2}(n)}$, the configuration $\xi_{t+ n^4}$ has no gap
of size $\log^{3/2}(n)$ within $n^{9}$ of $x$.
\end{corollary}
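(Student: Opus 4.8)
The plan is to derive Corollary \ref{good1} from Proposition \ref{cprect} by a routine union bound over a covering of the relevant spatial window by intervals of length comparable to a suitable power of $l$, with the time-scale choice $l = n^4$. Since $x \in (-n^2 2^n, n^2 2^n)$ and we only care about gaps within $n^9$ of $x$, it suffices to control the configuration $\xi_{t+n^4}$ on the interval $J = [x - n^9, x + n^9]$, which is contained in $[-2n^2 2^n, 2n^2 2^n]$ for $n$ large. Cover $[-2n^2 2^n, 2n^2 2^n]$ (or just a neighbourhood of $J$ of length $O(n^{10})$, say) by $O(n^{10}/l)$ overlapping blocks of the form $[a, a + Kl]$ with $K$ a fixed constant (e.g.\ $K = 3$), each shifted by $l$ from the previous one, so that any gap of size $\log^{3/2}(n) \ll l$ inside $J$ lies inside one such block away from its endpoints.

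The key steps, in order: (1) Fix $l = n^4$ and note $\log^{3/2}(n) = o(l)$ and also $n^{3/2} = o(l)$, so we may take $v = n^{3/2}$ and $u = \log^{3/2}(n)$ in Proposition \ref{cprect}. (2) For each block $B = [a, a+Kl]$ in the cover, apply Proposition \ref{cprect} to the contact process restricted to $B \times [t, t+l]$; by the hypothesis that $\xi_t$ has no vacant interval of length $n^{3/2}$ on $[-2n^2 2^n, 2n^2 2^n] \supset B$, the restricted process $\xi^B$ has at time $t$ no vacant subinterval of length $v = n^{3/2}$ in $B$, so the proposition gives
$$
P\big(\xi^B_{t+l} \equiv 0 \text{ on some } I \subset B,\ |I| \geq u\big) \ \leq \ \frac{Kl}{c_3}\big(e^{-c_3 u} + e^{-c_3 l/v}\big).
$$
(3) Observe that the true (unrestricted) configuration $\xi_{t+l}$ dominates $\xi^B_{t+l}$ on $B$ by attractiveness, so if $\xi_{t+l}$ has a gap of size $\geq u$ somewhere in the middle portion of $B$, then so does $\xi^B_{t+l}$ — hence controlling all the restricted processes controls the true one on all of $J$. (4) Union over the $O(n^{10}/l) = O(n^6)$ blocks: the total probability is at most $O(n^6) \cdot \frac{Kl}{c_3}(e^{-c_3 u} + e^{-c_3 l/v})$. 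With $l = n^4$, $u = \log^{3/2}(n)$, $v = n^{3/2}$, the second term $e^{-c_3 l/v} = e^{-c_3 n^{5/2}}$ is negligible, and the first term is $e^{-c_3 \log^{3/2}(n)}$; absorbing the polynomial prefactors $n^6 \cdot n^4 = n^{10}$ (the stated bound allows $n^9$, so one chooses $K$ and the overlap to make the number of blocks $O(n^5)$, giving prefactor $n^9$, or simply notes $n^{10} e^{-c_3\log^{3/2}n} \le n^9 e^{-c_4 \log^{3/2}n}$ for a slightly smaller $c_4$) yields the claimed bound $n^9 e^{-c_4 \log^{3/2}(n)}$.

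The main obstacle — really a bookkeeping point rather than a genuine difficulty — is making sure the covering is arranged so that \emph{every} potential gap of length $\log^{3/2}(n)$ within distance $n^9$ of $x$ is captured in the interior (not the boundary region of width $O(l)$) of at least one block to which Proposition \ref{cprect} applies; this forces the blocks to overlap by more than $l$ and to have length $Kl$ with $K$ chosen accordingly, and one must check the restriction hypothesis (no vacant subinterval of length $v$ at the initial time) is inherited on each block from the global no-gap hypothesis on $\xi_t$. The only other point requiring a line of justification is the comparison between the restricted contact process $\xi^B$ and the genuine contact process via attractiveness, ensuring that a gap in $\xi_{t+l}$ forces a gap in some $\xi^B_{t+l}$; this is immediate from $\xi^B_{t+l} \subset \xi_{t+l}$ (restricting the Harris system paths to $B$ can only remove infections), so a vacant interval for the true process is a fortiori vacant for the restricted one, and the bound transfers.
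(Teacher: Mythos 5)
Your proof is correct and follows the paper's intended route: the paper obtains Corollary \ref{good1} directly from Proposition \ref{cprect} (with $l=n^4$, $v=n^{3/2}$, $u=\log^{3/2}(n)$), and your covering of the $n^9$-window by overlapping blocks of length $Kn^4$ plus the union bound and the domination $\xi^B_{t+n^4}\subset\xi_{t+n^4}$ is exactly the routine bookkeeping that turns that proposition into the stated $n^9 e^{-c_4\log^{3/2}(n)}$ bound. Absorbing the extra polynomial factor by slightly decreasing the constant $c_4$ is fine, since $c_4$ is unspecified.
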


Simple large deviations estimates for the rightmost particle for a one-sided initial configuration
give the following:

\begin{lemma}
\label{cpspeed}
There exists a constant $c_5 \in (0, \infty ) $ so that for a given Harris system, the chance that there
is a path from $(- \infty,0) \times (0,T) $ to $(RT, \infty) \times(0,T)$ is less than $e^{-c_5RT} $ for
all $R > \frac{1}{c_5}, \ T > 1$.
\end{lemma}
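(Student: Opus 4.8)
The plan is to reduce the statement to the classical large‑deviation bound for the edge speed of the one‑sided supercritical contact process, which the sentence preceding the lemma already signals we may invoke: there are constants $\alpha<\infty$ and $c>0$ such that the rightmost descendant of the half line $(-\infty,0]$, started at time $0$ and observed over any time interval of length $S\ge 1$, travels distance at least $\beta S$ with probability at most $\frac1c e^{-c\beta S}$ for every $\beta\ge\alpha$ (see \cite{DSch} and Theorem~3.23 of \cite{lig85}); in particular, for $R$ large, the probability that such a process reaches distance $RT/2$ within time at most $T$ is at most $\frac1c e^{-cRT}$. All that remains is to fit the event in the statement — which concerns infection paths between space–time \emph{sets}, with both endpoint times ranging over the whole interval $(0,T)$ — into this form.

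First I would pin down the spatial endpoints. If there is a path from $(x,s)$ with $x<0$, $s\in(0,T)$, to $(y,t)$ with $y>RT$, $t\in(0,T)$, then $s<t$ and, the path being nearest neighbour, it visits site $0$; cutting it at a time $s'\in(s,t)$ at which it sits at $0$ produces a path from $(0,s')$ to $(y,t)$ with $0<s'\le t<T$ and $y>RT$. Hence the event in the statement is contained in
$E:=\{\exists\,0\le s'\le t\le T,\ \exists\,z\ge RT:\ (0,s')\leftrightarrow(z,t)\}$,
a statement now about descendants of the single site $0$.

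Next I would discretize the remaining free parameter, the starting time $s'$: $E\subset\bigcup_{j=0}^{\lfloor T\rfloor}E_j$, where $E_j$ is the event that there is a path from $\{0\}\times[j,j+1]$ to $(RT,\infty)\times[j,T]$. Fix $j$. The rightmost site reachable from $\{0\}\times[j,j+1]$ at time $j+1$ is some random $W_j$, and $\{W_j\ge w\}$ forces each of the edges $\{0,1\},\{1,2\},\dots,\{w-1,w\}$ to carry a link inside the unit window $[j,j+1]$, so $P(W_j\ge w)\le(1-e^{-\lambda})^{w}$ for $w\ge1$ — an elementary link‑counting estimate, uniform in $j$. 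On $E_j$ the configuration at time $j+1$ reached by following paths from $\{0\}\times[j,j+1]$ lies in $(-\infty,W_j]$, so by attractiveness the rightmost descendant of that configuration over the remaining time (of length at most $T$) is dominated by $W_j$ plus the rightmost descendant of the half line $(-\infty,0]$ over a time interval of length at most $T$. Splitting according to whether $W_j\ge RT/2$ and applying the cited bound (with effective speed $\ge R/2>\alpha$ for $R$ large) gives $P(E_j)\le\frac1{c}e^{-cRT/2}+\frac1{c}e^{-cRT}$ for a fixed $c>0$ once $R$ is large. Summing over the at most $T+1$ values of $j$, and using that $T>1$ forces $\log(T+1)\le T\le RT$ so that the polynomial prefactor is swallowed by the exponential, we obtain $P(E)\le e^{-c_5RT}$ upon choosing $c_5>0$ small enough that $R>1/c_5$ both forces $R/2>\alpha$ and dominates the prefactor.

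The only real obstacle is bookkeeping: getting the space–time‑set formulation down to descendants of a single site over a single unit time window (the passage to $E$ and to the $E_j$'s), and making sure the rightmost‑particle large‑deviation estimate is applied uniformly over time intervals of length $\le T$ and over the unit window in which the path is allowed to start. The substantive input — linear growth of the edge with exponential large deviations — is classical and is used here as a black box.
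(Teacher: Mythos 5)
Your proposal is correct and is essentially the argument the paper has in mind: the paper gives no written proof, simply asserting the lemma follows from the classical large--deviation estimates for the rightmost particle of a one--sided supercritical contact process (\cite{DSch}, \cite{lig85}), and your reduction via cutting the path at site $0$, discretizing the starting time into unit windows, link--counting within a window, and then invoking the edge--speed bound is exactly a fleshed--out version of that. One small repair: define $W_j$ as the rightmost site reached by paths from $\{0\}\times[j,j+1]$ at \emph{any} time in $[j,j+1]$ (not only at time $j+1$), so that paths terminating inside the window are also covered; the same link--counting bound $P(W_j\ge w)\le (1-e^{-\lambda})^{w}$ applies verbatim and the rest of your splitting goes through unchanged.
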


\section{An approximate equilibrium}
\label{sec-ac}

\noindent Consider a Harris system on $\Z \times (- \infty, 0)$ and define $\xi'$ and $\xi$ as follows:
\vspace{0.1 cm}

\noindent $\xi'(x) = 1$ if and only if ${(\hat{\xi}^{x}_{s})}_{s \geq 0}$ the dual
restricted to some space time region $R_{x}$ satisfies some condition $C_{x}$.
\vspace{0.2cm}

\noindent $\xi(x) = 1$ if and only if  $\hat{\xi}^{x}$ survives forever. In particular $\xi$ is in equilibrium $\bar \nu$.
\vspace{0.3cm}

\noindent  Writing $p_{x}$ for the probability that $\xi'(x)\neq\xi(x)$, it is clear that if
$\sum_{x} \ p_x \leq \frac{1}{2}$, then $ {\xi}^\prime$ has a law equal
to the equilibrium distribution conditioned on an event of reasonable probability.

\vspace{0.3cm}
We now get to define $R_x$ and $C_x $ adapted to scale $2^n$.  We first fix 
$h_1$ the positive constant of
Lemma \ref{lemreg}.

A) For $\vert x \vert \leq n^9:$

$R_{x}=\Z \times (-t(n),0)$ where  $t(n) =log^4(n)/2$ 
 and $C_{x}$ is the event that at time $t(n)$, $\hat{\xi}^{x} \cap [-n^9, n^9 ]$ has size at least
 $\geq h_1 t(n)$. 

\vspace{0.2cm}

B) For $\vert x \vert >n^9:$

$R_{x}=\Z \times (- \infty, 0)$ and $C_{x}$ the condition of surviving forever.

\vspace{0.3cm}
To verify the condition
$$
\sum_x p_x \  <  \ 1/2,
$$
for $n$ large, we first note that the summands for $\vert x \vert > n^9  $ are zero.  Secondly we
have by (i) in Proposition \ref{cpinterval}, and taking $c_1$ the positive constant in that statement:
$$
\sum_{|x|\leq n^9} P(\xi'(x) =1, \xi (x) = 0) \leq \ \frac{1}{c_1}\left( (2n^9+1) e^{-c_1 h_1 log^4 (n)/2}  \right)
$$
which converges to zero as $n$ tends to infinity.  

For the term $\sum_x P(\xi'(x) =0, \xi (x) = 1)$  summed over $\vert x  \vert \leq n^9$,
we apply  Lemma \ref{lemreg} to get the required bounds.

We now alter this definition for  $\vert x \vert > n^9 $.  The objective is to define a
configuration which is essentially the same as above but which is independent of certain
rectangles of the Harris system.  The ``cost" of losing the global closeness to equilibrium by changing
the values far away is small compared to the independence gained.  We replace condition B) with \\

$\text{B}^\prime $) for $\vert x \vert > n^9  $, we set $\xi^\prime (x) =1$.

It is to be noted that with this amended definition the configuration
$\xi^\prime $ is independent of the Harris system on
$\Z \times (- \infty, -t(n))$

We let $\nu (=\nu(n))$ be the distribution of the configuration $\xi^\prime$ with rules given in A) and $\text{B}^\prime )$ above,
conditioned on the event that for all $x \in [-n^9, n^9]$, $\xi'(x)=1$ whenever $\hat \xi^ x$ survives till time  $t(n)$.

\section{A regeneration time}
\label{sec-reg}

The purpose of this section is to describe a regeneration time $\sigma = \sigma(n,T) $ associated to a
space and time scale $2^n$ and a stopping time $T$ (also called $n$ order regeneration time). We remark 
that stopping times in this paper will always refer to the natural filtration of our Harris system (plus 
some auxiliary random variables). In this sense, also the regeneration time will be a stopping time 
occurring after the stopping time $T$. The construction will be such that at time $\sigma $ a
random configuration $\xi ^\prime_ \sigma$ will be produced so that\\
(i) $\xi ^\prime_ \sigma$ has distribution $\nu$ ($= \nu (n)$ as in Section \ref{sec-ac}) relative to $X_\sigma$, \\
(ii) with very high probability $ \xi ^\prime_ \sigma (x)\ = \ \xi _ \sigma (x) $ for $ \vert x- X_ \sigma \vert \leq n^9  $.

{\it Remark:} Of course given $\xi ^\prime_ \sigma (x)\equiv 1$ for $ \vert x- X_ \sigma \vert > n^9 $, we cannot have
$ \xi ^\prime_ \sigma \ = \ \xi _ \sigma $.  The idea is that in subsequent evolution of a dynamic
random walk $(\xi^\prime,X^\prime )$
with $  X ^\prime_ \sigma \ = \ X _ \sigma$ we have $X ^\prime_s\ = \ X  _s$
with very high probability.  See Lemma \ref{noencroach}.

We will suppose $n$ is fixed and drop it from notation for our regeneration time $\sigma=\sigma(n)$.
We also translate our Harris system temporally by $T$ and spatially by $X_T$ so in the following we take
$(T,X_T) \ = \ (0,0)$.

The time $\sigma $ is obtained via a series of {\it runs}. 
Each run will probably be aborted before completion but if it doesn't then,
as far as evolution on a scale of $2^n $ is concerned, the process will start from a given distribution
(which will weakly depend upon $n$).  In the following, $t$ could be any time in $(0,2^{2n}) $,
but see the discussion at the end of the section on extending it to times in $[0,2^{Kn}]$ for $K$ large but fixed.
 We begin a run at time $t$ (for the first run $t=0$) by considering the joint $(\xi,X)$ process on time
interval $(t,t+n^4 + \log^4(n))$.  If the run is aborted, then we try a subsequent ``run" at time $t+n^4 + \log^4(n)$ 
and so on until a complete run is obtained. 

A run consists of at most five stages. A (complete) run will produce $\sigma = t+n^4 + \log^4(n)$, either  
if the first stage is a failure or if all five stages succeed, in which case we say that the run is successful. 
The latter case will be good from the point of view of (ii) above while the first case will be bad (but mercifully of 
small probability). If the run is {\it successful}, then the distribution of
$  \xi_{t+n^4 + \log^4(n)  }$ shifted by $X_{t+n^4 + \log^4(n) }$ will be $\nu$ at least on interval $(-n^9 , n^9 )$.

As we shall now see: the first stage will succeed with very large probability. The second stage will succeed with
probability of order $e^{-M^{\prime}\log^4(n)}$.  The next two will succeed with high probability (for $n$ large),
the fifth with a reasonable probability, given the success of the previous stages.

$\bullet$ The first stage consists simply of seeing if there is a vacant gap of size $n^{3/2} $ for $\xi_t$ on $(X_t-2n^2 2^n, X_t+2n^2 2^n)$.
If so we conclude the run and designate $\sigma = t+n^4 + \log^4(n) $.  We put
$X^\prime_\sigma$ equal to $X_\sigma$ and $\xi ^\prime_\sigma$ to be a random distribution independent of the natural
Harris system for $(\xi,X)$ so that shifted by $X_\sigma, \ \xi^\prime_\sigma$ has distribution $\nu $. 
Technically this gives a complete run (since it has established $\sigma $) but of course this case has severed any link
between $\xi $ and $\xi^\prime$ and will be treated as a ``disaster".  It is however easy to see that the chance this occurs
for $t$ in $[0,2^{Kn}]$ (fixed $K$) is bounded by $e^{-cn^{3/2}}$
for some universal $c$ (see Lemma \ref{nogaps} below).   Thus the contribution to the various integrals considered in later sections
will be negligible. If there are no $n^{3/2} $ gaps we describe the first stage of the run as a success.

$\bullet$ The second stage is a success if (recalling the notation set in the introduction) we have \\
1) $N^X(t+n^4)-N^X(t) \ < \ M n^4$ and \\
2) $N^X(t+n^4+ \log^4(n))-N^X(t+n^4) \ = \ 0$.

We remark that the first condition is satisfied with probability tending to one as $n$ becomes large, while the second condition has
probability exactly $e^{-M^{\prime}\log^4 (n)}$.  The first  condition implies that whatever the contact process might be,
$X$ moves less than $M n^4$ on the time interval $(t,t+n^4 )$ 
and is constant on the time interval
$(t+n^4,t+n^4+ \log^4(n) )$.  As with subsequent stages, if this is a failure
we let the process run up until time $t+n^4+ \log^4(n)  $ in order to regain the Markov property.

$\bullet$ Given the second stage is a success, we pass to the next, and require that on the interval
\footnote{When the notation would be too clumsy, we write $X(t)$ instead of $X_t$.}
$(X(t+n^4)-n^9,X(t+n^4)+n^9)$ there be no gaps of size $log^{3/2}(n)$ for $\xi_{t+n^4}$.

We note here that as $\xi_t$ has no $n^{3/2}$ gaps the chance of this event, given also a successful second
stage, is close to one by Corollary \ref{good1}.

$\bullet$ For the fourth stage we construct $\xi'_{t+n^4+\log^4 (n) } $
according to the $n$ level specifications from the given Harris system shifted spatially
by $X(t+n^4+ \log^4(n))$ and temporally by $t_1=t+n^4+\log^4 (n)$:\\
 For $|x| \leq n^9$, $ \xi' _{t+n^4+\log^4 (n)}(X(t+n^4+ \log^4(n))+x) = 1$ if and only if
the dual $\hat\xi^{x+X(t+n^4+ \log^4(n)),t_1} $ survives for time
$t(n)=\log^4(n)/2$ satisfying condition $C_x$ (Sec. \ref{sec-ac}) suitably displaced.

\noindent For $|x|>n^9$ we set $ \xi'_{t+n^4+\log^4 (n)}(X(t+n^4+ \log^4(n))+x) = 1$.
\\
The fourth stage is successful if
$$
\xi'_{t+n^4+\log^4 (n) } (x) \ge \tilde\xi_{t+n^4+\log^4 (n) } (x), \quad {\text{ for all}} \quad x
$$
where $\tilde\xi_{t+n^4+\log^4 (n)}$ is given by
$\tilde\xi_{t+n^4+\log^4 (n)}(x)= 1$ if and only if the dual\\
$\hat\xi^{x,t_1} $ survives for time
$t(n)=\log^4(n)/2$ (or equivalently $\tilde\xi_{\cdot}$ is the contact process defined on the Harris system from time
$t+n^4+\log^4 (n)/2$,  starting with full occupancy). By Lemma \ref{lemreg},
the probability of success at the fourth step tends to one as $n$ tends to infinity.
Indeed, it amounts to prove that for all $|x|\le n^9$ one
has $ \xi' _{t+n^4+\log^4 (n)}(X(t+n^4+ \log^4(n))+x) = 1$ whenever $\hat\xi^{x+X(t+n^4+ \log^4(n)),t_1}=1$, and
the calculation is essentially given in Section 3.
It is to be noted that this condition relies on a Harris system disjoint from (and independent of) the Harris
systems observed in stage 2.

$\bullet$ Finally for the fifth stage we note that, provided that the requisite stages have been successfully
passed, the conditional chance that for every $x$ such that $|X_{t+n^4}-x|\leq n^9 $ and $\xi' _{t+n^4+\log^4 (n) } (x)=1$ one has
$$
\xi' _{t+n^4+\log^4 (n) } (x)  \ = \ \xi _{t+n^4+\log^4 (n) } (x)
$$
is at least $3/4$ for $n$ large, as an easy calculation using Lemma \ref{hit} shows. This conditional probability will
depend on the random configuration $\xi' _{t+n^4+\log^4 (n) } $ as well as the configuration
$\xi_{t+n^4}$. Let us denote it by $p(\xi' _{t+n^4+\log^4 (n) }, \xi_{t+n^4})$.   Having introduced an auxiliary independent uniform
random variable $U$ associated to the ``run" (enlarging the probability space if necessary), we then say that the
run is (globally) a success if
$$
\xi' _{t+n^4+\log^4 (n) } (x)  \ = \ \xi _{t+n^4+\log^4 (n)} (x)
$$
and $U \ \leq \ \frac{3/4}{p(\xi' _{t+n^4+\log^4 (n) }  , \xi_{t+n^4})}$.

Using this randomization procedure we see that, conditionally on success,
the distribution of $\xi_{t+n^4+\log^4 (n) } $  shifted by  $X_{t+n^4+\log^4 (n) }$ and
restricted to the interval $[-n^9,n^9]$ coincides with $\nu$ restricted to $[-n^9, n^9]$.

{\it Notation:} For a stopping time $T$, we let $\sigma_T \ =\ \sigma(n,T)$ denote the end time of the first successful
run after beginning the runs at time $T$:\\
$\sigma_T \ = \ \inf \{ T + k(n^4+\log^4 (n) ): $ a successful run is initiated at time
$T + (k-1)(n^4+\log^4 (n) )\}$.  We say $ \sigma_T$ is the $T$ regeneration. We say that
a {\it disaster} occurs at $\sigma_T$ if $\xi'_{\sigma_T} (x) \neq \xi_{\sigma_T} (x) $
for some $x$ within $n^9$ of $X_{\sigma_T}$.
The arguments given above  imply the following.

\begin{lemma}\label{reg-lemma}
There exists a positive constant $c$ so that the probability of a successful run is at least $\frac{c}{n} e^{-M^\prime log^4(n)}$.
\end{lemma}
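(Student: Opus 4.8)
The plan is to read the bound straight off the five-stage structure of a run, essentially assembling the estimates already established in this section. A run initiated at time $t$ is declared \emph{successful} whenever its first stage fails or all five of its stages succeed; since we only seek a lower bound it suffices to estimate $P(\text{all five stages succeed})$, and I would write this as
$$
P(\text{all five stages of the run at time }t\text{ succeed})\;=\;\prod_{j=1}^{5}P\bigl(\text{stage }j\text{ succeeds}\bigm|\text{stages }1,\dots,j-1\text{ succeed}\bigr),
$$
bounding each of the five conditional factors from below. For the first factor, the configuration seen at the start of any run within the relevant time horizon has, with probability at least $1-e^{-cn^{3/2}}$, no vacant gap of length $n^{3/2}$ on $(X_t-2n^2 2^n,X_t+2n^2 2^n)$ — this is exactly the estimate recorded in the discussion of the first stage, which follows from the absence of such a gap at the start of the construction together with the gap-nonproliferation bounds of Proposition~\ref{cprect} and Corollary~\ref{good1} — so the first factor is at least $1/2$ for $n$ large.

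For the remaining four factors I would argue as follows. In the second stage conditions 1) and 2) involve $N^X$ on the disjoint intervals $(t,t+n^4)$ and $(t+n^4,t+n^4+\log^4(n))$ and hence are independent; since $M>M^\prime$ the probability of 1) tends to $1$ while 2) has probability exactly $e^{-M^\prime\log^4(n)}$, giving a second factor of at least $\tfrac12 e^{-M^\prime\log^4(n)}$ for $n$ large. In the third stage, on the event that stages 1 and 2 succeed we are evolving a gap-free $\xi_t$ over time $n^4$ with $X(t+n^4)$ within $Mn^4\ll n^2 2^n$ of $X_t$, so Corollary~\ref{good1}, applied with a union bound over the possible positions of $X(t+n^4)$, makes the third factor $1-o(1)$. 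In the fourth stage a failure would require that, for some $|x|\le n^9$, the dual $\hat\xi^{x+X(t+n^4+\log^4(n)),t_1}$ survive to time $t(n)=\log^4(n)/2$ while occupying fewer than $h_1 t(n)$ sites in the prescribed interval; by Lemma~\ref{lemreg} and a union bound over the $O(n^9)$ relevant starting sites this probability is at most a polynomial in $n$ times $e^{-h_1\log^4(n)/2}$, so the fourth factor is again $1-o(1)$, and this stage reads only the Harris system on $(t+n^4+\log^4(n)/2,\,t+n^4+\log^4(n))$, disjoint from whatever the first three stages read. In the fifth stage, given the success of the first four stages the matching probability $p(\xi'_{t+n^4+\log^4(n)},\xi_{t+n^4})$ is at least $3/4$ for $n$ large by the computation based on Lemma~\ref{hit}, and the randomization against the independent uniform $U$ then makes the conditional probability of a global success equal to $3/4$. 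Multiplying the five factors yields at least $\tfrac12\cdot\tfrac12 e^{-M^\prime\log^4(n)}\cdot(1-o(1))\cdot(1-o(1))\cdot\tfrac34\ge\tfrac18 e^{-M^\prime\log^4(n)}$ for $n$ large, which exceeds $\tfrac{c}{n}e^{-M^\prime\log^4(n)}$ for a suitable universal $c>0$ (shrinking $c$ further absorbs the finitely many small values of $n$).

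I expect the only genuine obstacle to be the independence bookkeeping needed to legitimize the product decomposition: one must verify that the five success events are, respectively, measurable with respect to $\xi_t$; to $N^X$ on $(t,t+n^4+\log^4(n))$; to the Harris evolution on $(t,t+n^4)$ together with $X$ up to $t+n^4$; to the Harris system on $(t+n^4+\log^4(n)/2,t+n^4+\log^4(n))$; and (for the matching) to the Harris system on $(t+n^4,t+n^4+\log^4(n))$ together with $U$ — and in particular that the success of the second stage pins $X$ constant on $(t+n^4,t_1)$, so that $X(t_1)=X(t+n^4)$ is already determined before the fourth-stage region is read and that stage is genuinely independent of the first three. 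With that in place the lemma is just a transcription of the estimates proved above.
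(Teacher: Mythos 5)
Correct, and essentially the paper's own argument: the paper gives no separate proof beyond the stage-by-stage estimates ("the arguments given above imply the following"), and your product decomposition with the independence bookkeeping is exactly that assembly, yielding a constant-order bound that is stronger than the stated $\frac{c}{n}e^{-M^\prime\log^4(n)}$. One small simplification: since a first-stage failure is itself counted as a complete (hence successful) run, your first factor can simply be dropped rather than justified via the equilibrium no-gap estimate, which also makes the lower bound uniform over the configuration at the start of the run — the form needed for the repeated application of the Markov property in Proposition \ref{gencontrol}.
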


\begin{proposition} \label{gencontrol}
There exists a constant $c_6 \ \in \ (0, \infty )$ so that for $n $ large and any stopping time, $T$, for the
filtration $(\mathcal F_t)$ determined by the $(\xi,X)$ process, 
$\sigma_T $, the first time for a successful run starting at $T$, satisfies
$$
P(\sigma_T > T+n^8e^{M^\prime \log^4 (n)} |\cF_T) \ < \ e^{-c_6n^3}.
$$
\end{proposition}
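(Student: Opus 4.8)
The plan is to realize $\sigma_T$ as the end of the first complete run in the sequence of runs of length $n^4+\log^4(n)$ constructed in this section, and to bound below, uniformly over the past, the probability that any given run is complete; a geometric tail estimate then does the rest. Write $p_n:=\tfrac{c}{n}\,e^{-M^\prime\log^4(n)}$ for the lower bound furnished by Lemma~\ref{reg-lemma} on the probability of a successful run. A window of length $n^8 e^{M^\prime\log^4(n)}$ contains of order $n^8 e^{M^\prime\log^4(n)}/n^4=n^4 e^{M^\prime\log^4(n)}$ runs, and $p_n$ times this number is of order $n^3$, which is precisely the exponent claimed.

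First I would record the one–run estimate. Write $t_k=T+k\,(n^4+\log^4(n))$ for $k\ge 0$, so that the $k$-th run occupies $[t_k,t_{k+1})$ and, if complete, sets $\sigma_T=t_{k+1}$. The claim is that, a.s. and uniformly in $k$,
$$
P\big(\text{run }k\text{ is complete}\ \big|\ \cF_{t_k}\big)\ \ge\ p_n .
$$
This is essentially Lemma~\ref{reg-lemma} read conditionally. Its first stage is $\cF_{t_k}$-measurable: if $\xi_{t_k}$ has a vacant gap of size $n^{3/2}$ in $(X_{t_k}-2n^22^n,X_{t_k}+2n^22^n)$ the run completes (as a ``disaster''), so the conditional probability of completion is then $1$. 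On the complement of that event the stagewise success probabilities estimated in the construction are all valid conditionally on $\cF_{t_k}$: the second stage is governed by increments of $N^X$ over $[t_k,t_{k+1})$, independent of $\cF_{t_k}$, with conditional probability $(1-o(1))\,e^{-M^\prime\log^4(n)}$; the third and fourth stages, whose conditional probabilities tend to $1$, are controlled respectively by Corollary~\ref{good1} (applicable because $\xi_{t_k}$ has no $n^{3/2}$-gap and, by the second stage, $X$ has moved at most $Mn^4$ by time $t_k+n^4$) and by Lemma~\ref{lemreg}; and the fifth stage has conditional probability exactly $3/4$, by Lemma~\ref{hit} together with the auxiliary uniform attached to the run. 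The product is $\ge p_n$ for $n$ large, so in either case the displayed inequality holds.

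Next I would iterate. By the tower property, $P(\text{runs }0,\dots,m-1\text{ all incomplete}\mid\cF_T)\le(1-p_n)^m$ for every $m\ge 1$. Set $N:=\big\lfloor n^8 e^{M^\prime\log^4(n)}/(n^4+\log^4(n))\big\rfloor$. Since each run has length $n^4+\log^4(n)$, the event $\{\sigma_T>T+n^8 e^{M^\prime\log^4(n)}\}$ forces runs $0,1,\dots,N-1$ all to be incomplete, so
$$
P\big(\sigma_T>T+n^8 e^{M^\prime\log^4(n)}\ \big|\ \cF_T\big)\ \le\ (1-p_n)^N\ \le\ e^{-p_nN}.
$$
For $n$ large one has $n^4+\log^4(n)\le 2n^4$, hence $N\ge\tfrac12 n^4 e^{M^\prime\log^4(n)}-1$ and therefore $p_nN\ge\tfrac{c}{2}n^3-\tfrac{c}{n}\ge\tfrac{c}{3}n^3$; taking $c_6=c/3$ yields the proposition.

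The step I expect to be the main obstacle is the uniform-in-$k$ conditional bound of the second paragraph: one must check that conditioning on $\cF_{t_k}$ — in particular on the outcomes of all earlier, aborted runs, which might a priori bias $\xi_{t_k}$ toward an unfavourable configuration — does not weaken the per-run completion probability. This relies on the design of the construction, in which the single test depending on the past (the first stage) is $\cF_{t_k}$-measurable, while each subsequent stage is decided either by a slice of the Harris system, or of $N^X$, or of the auxiliary uniform over the disjoint interval $[t_k,t_{k+1})$, or by a conditional estimate of the type already proved in Corollary~\ref{good1}, Lemma~\ref{lemreg} and Lemma~\ref{hit}. Once this bound is secured, the remainder is the routine geometric-tail computation above.
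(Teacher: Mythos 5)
Your proposal is correct and is essentially the paper's own argument: the paper proves Proposition \ref{gencontrol} in one line by combining Lemma \ref{reg-lemma} with repeated application of the Markov property and the bound $n^4+\log^4(n)\le 2n^4$ on the run length, which is exactly your geometric-tail computation with $p_n=\tfrac{c}{n}e^{-M^\prime\log^4(n)}$ and $N\approx \tfrac12 n^4 e^{M^\prime\log^4(n)}$ runs. Your more detailed verification that the per-run completion probability holds conditionally on $\cF_{t_k}$ (first stage being $\cF_{t_k}$-measurable, later stages driven by the Harris system, $N^X$ and the auxiliary uniform on the disjoint slice) is precisely what the paper's phrase ``repeated application of the Markov property'' compresses.
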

\begin{proof}
Since each run takes an interval of time of length at most $n^4 +\log^4(n)\le 2n^4$, the proof follows at once from Lemma \ref{reg-lemma} by repeated application of the Markov property.
\end{proof}




The next result is crucial for our regeneration arguments as it implies that replacing
at regeneration times our configuration $\xi_ \sigma $ by a regenerated $\xi^ \prime_ \sigma $ 
does not change the
evolution of $X$, thus facilitating an i.i.d. structure for $X$.
\begin{lemma} \label{noencroach}
Consider two dynamic random walks $(\xi,X)$ and $(\xi^\prime,X^\prime )$ run with the same Harris system
and so that \\
(i)  $X_0 = X^\prime _0 $\\
(ii) $\xi_0 (x) =   \xi^\prime_0 (x) $ for $\vert x-X_0 \vert \leq n^9$,
elsewhere $\xi_0 (x) \leq   \xi^\prime_0 (x)$.

Then, for a suitable positive constant $c_8$, outside of probability $e^{-c_8n^2}$ we have either\\
a) $\xi_0$ has an $n^{3/2} $ gap within $2^{4n} $ of $X_0$, or\\
b) $X_{n^4} = X^\prime_{n^4} $  and $\xi_{n^4} (x) =   \xi^\prime_{n^4} (x) $
for $\vert x-X_{n^4} \vert \leq 2^{3n}$.
\end{lemma}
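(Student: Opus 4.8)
The plan is to show that the two walks $X$ and $X'$ stay coupled for time $n^4$ as long as the region where $\xi$ and $\xi'$ differ never reaches the (common) position of the walks, and then to bound the probability that this bad event occurs. First I would introduce the \emph{discrepancy region} $\Delta_s = \{x : \xi_s(x) \neq \xi'_s(x)\}$. By hypothesis (ii), at time $0$ the set $\Delta_0$ is contained in $\{|x-X_0| > n^9\}$, and moreover $\xi_0 \le \xi'_0$ everywhere; by attractiveness (monotonicity of the graphical construction) this ordering is preserved, so $\Delta_s = \xi'_s \setminus \xi_s$ for all $s$. The key observation is that as long as $X$ (equivalently $X'$, while they agree) stays at Hamming distance more than $r_0$ from $\Delta_s$ — i.e.\ the configurations $\theta_{X_s}\!\circ\xi_s$ and $\theta_{X_s}\!\circ\xi'_s$ agree on $[-r_0,r_0]$ — the common Harris system (including $N^X$ and the uniforms $\{U_i\}$) drives $X$ and $X'$ identically, so $X_s = X'_s$. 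Thus the coupling of the walks can only break if the walk's $r_0$-neighbourhood meets $\Delta_s$ before time $n^4$.

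Next I would control the spatial spread of $\Delta_s$. Since $\xi' \ge \xi$ and both are built from the same Harris system, a site can only enter $\Delta$ via an infection path emanating from a site already in $\Delta$; hence, writing $d(0) = \mathrm{dist}(X_0, \Delta_0) > n^9$, the set $\Delta_s$ cannot reach within $r_0$ of $X_0$ by time $n^4$ unless some infection path in the Harris system travels a spatial distance $\ge n^9 - r_0 - (\text{displacement of }X)$ in time $\le n^4$. On the other side, the walk itself moves at most $N^X(n^4) - N^X(0)$ steps in time $n^4$. On the event (not a)) — i.e.\ when we are \emph{not} in case (a) — we have more room, but in general I would split into two contributions: (1) the event $\{N^X(n^4)-N^X(0) \ge n^{5}\}$, say, which by a Poisson tail bound has probability $\le e^{-c n^5}$; and (2) given the walk moves at most $n^5 \ll n^9$, the event that an infection path covers spatial distance $\ge n^9/2$ in time $n^4$, which by Lemma~\ref{cpspeed} (applied to the relevant one-sided large-deviation estimate for the rightmost/leftmost descendant, with $R = n^9/(2n^4) = n^5/2 \gg 1/c_5$ and $T = n^4$) has probability $\le e^{-c_5 (n^5/2) n^4} \le e^{-c n^9}$. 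Summing, the probability that the walks decouple, or that the discrepancy reaches the walk, before time $n^4$ is at most $e^{-c_8 n^2}$ with room to spare; this gives $X_{n^4} = X'_{n^4}$.

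It remains to upgrade to the stronger conclusion in (b), namely that $\xi_{n^4}(x) = \xi'_{n^4}(x)$ for all $|x - X_{n^4}| \le 2^{3n}$, on the event that $\xi_0$ has no $n^{3/2}$ gap within $2^{4n}$ of $X_0$. Here the point is that $\Delta_0 \subset \xi'_0 \setminus \xi_0$: a site $x$ with $|x-X_0|$ large and $\xi_0(x)=0$, $\xi'_0(x)=1$ corresponds to a dual $\hat\xi^{x,0}$ that survives to the relevant finite time but eventually dies out. By Proposition~\ref{cpinterval}(ii) (the exponential tail for surviving a long time then dying), such dual components are sparse and short-lived, and by Lemma~\ref{cpspeed} their forward influence spreads at bounded speed $\le n^4 \cdot (1/c_5)$-ish; since $2^{3n} + n^4/c_5 < 2^{4n}$ for $n$ large, any discrepancy that could reach the window $[X_{n^4}-2^{3n}, X_{n^4}+2^{3n}]$ by time $n^4$ must originate from $\Delta_0$ within distance $\approx 2^{4n}$ of $X_0$. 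Combining the no-large-gap hypothesis (which, via Lemma~\ref{hit}, forces $\xi_{n^4} = \xi'_{n^4}$ on that window except on an event of probability $\le e^{-c n^{3/2}}$, crushing any surviving discrepancy) with the speed bound finishes the proof after adjusting the constant $c_8$.

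I expect the main obstacle to be the second half — propagating the \emph{configurational} agreement (not just $X_{n^4}=X'_{n^4}$) out to the large scale $2^{3n}$. One must argue carefully that every discrepancy seeded by a dying dual component either disappears by time $n^4$ or never makes it into the target window, and this is where the hypotheses (no $n^{3/2}$ gap within $2^{4n}$, giving room and density via Lemmas~\ref{hit} and~\ref{cpspeed}) are used in an essential way; the bookkeeping of the several exponential error terms so that their sum is $\le e^{-c_8 n^2}$ is routine but needs attention.
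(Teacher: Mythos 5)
Your first half---the coupling of the walks---is correct, and it actually takes a different route from the paper: you bound the speed at which the discrepancy set $\Delta_s=\{x:\xi_s(x)\neq\xi'_s(x)\}$ can spread (every discrepancy at time $s$ traces back via an infection path to $\Delta_0\subset\{|x-X_0|>n^9\}$), combine this with a Poisson bound on the displacement of the walker, and conclude that the rates seen by $X$ and $X'$ agree up to time $n^4$, so the common marks $N^X,\{U_i\}$ drive them identically. The paper instead builds ``shields'': it splits $[X_0-n^9,X_0+n^9)$ into intervals of length $n^5$ and asks, on each side of the walker, for a vertical infection path inside one such interval starting from a site occupied in both configurations; by planarity such a path blocks any influence from outside, giving $\xi_s=\xi'_s$ on $[X_0-n^5-r_0,X_0+n^5+r_0]$ for all $s\le n^4$ and hence $X_{n^4}=X'_{n^4}$. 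Both arguments work for this step.

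The genuine gap is in the second half, the claim that $\xi_{n^4}=\xi'_{n^4}$ on $[X_{n^4}-2^{3n},X_{n^4}+2^{3n}]$. You picture the discrepancies as sparse, short-lived seeds (``dying dual components'') whose influence must travel into the target window, and you invoke Proposition \ref{cpinterval}(ii) and a finite-speed bound to keep them out. But the lemma concerns arbitrary ordered configurations, and in the intended application $\xi'_0\equiv 1$ outside $[X_0-n^9,X_0+n^9]$ while $\xi_0$ is equilibrium-like, so $\Delta_0$ has positive density and already fills the annulus $n^9<|x-X_0|\le 2^{3n}+n^5$ at time $0$: nothing has to ``reach'' the window, and no propagation-speed estimate can remove discrepancies that are there from the start. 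What is needed (and what the paper does) is a per-site healing estimate: for each $x$ within $2^{4n}/2$ of $X_0$, either the dual $\hat\xi^{x,n^4}_{n^4}$ is empty (both processes vacant at $x$), or by Lemma \ref{lemreg} it has at least $h_1n^4/2$ particles in $(x,x+n^4/2)$, and then, by Lemma \ref{hit} together with the no-$n^{3/2}$-gap hypothesis on $\xi_0$, it intersects $\xi_0$ except with probability at most $e^{-c_2h_1n^{5/2}}$, in which case $\xi_{n^4}(x)=1=\xi'_{n^4}(x)$ because $\xi_0\le\xi'_0$. A union bound over the $O(2^{4n})$ sites gives an error at most $(2^{4n}+1)\left(e^{-c_2h_1n^{5/2}}+h_1^{-1}e^{-h_1n^4/2}\right)\le\frac12e^{-n^2}$, which fits under $e^{-c_8n^2}$. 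Your proposal never carries out this dual argument or the union bound, and the error you quote for this step, $e^{-cn^{3/2}}$, is both unjustified and too large: it exceeds $e^{-c_8n^2}$ for large $n$, so it could not deliver the stated conclusion even if it were correct.
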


\begin{proof}
It is only necessary to show the inequality holding for $n$
sufficiently large, so in the following we will take $n$ to be large
enough that a finite number of asymptotic inequalities hold. It suffices
to consider  initial configurations $(\xi_0,X_0)$ and $(\xi'_0, X'_0)$ as in {\it (i)} and {\it (ii)}  such that
$\xi_0$ has no $n^{3/2} $ gaps within $2^{4n} $ of $X_0$, and to find  two ``bad" events $B_1$ and $B_2 $,
with appropriately small probabilities conditioned on such $\xi_0$, and such that if neither $B_1$ nor $B_2$ occur, then
\begin{equation}
\xi_{n^4} (x) \ = \ \xi^ \prime _{n^4} (x) \ \text{ for all } x \in [X_{n^4}-2^{3n},X_{n^4}+2^{3n}]
\label{coinc-1}
\end{equation}
and
\begin{equation}
\label{coinc-2}
X_{n^4} \ = \ X^ \prime _{n^4}.
\end{equation}


The first event, $B_1$, is simply taken as $ \cup _ {x \in [X_0-2^{4n}/ 2, X_0+2^{4n}/ 2 ]} B_1^x $, where
\begin{equation}
\label{bad1}
B_1^x=\left\{ \hat \xi ^{x,n^4}_{n^4} \ne \emptyset  ,\; \hat \xi
^{x,n^4}_{n^4} \cap \xi_0 \cap [X_0-2^{4n}, X_0+2^{4n}] = \emptyset
\right\}.
\end{equation}

We clearly have $B_1^x \subset C_1^x \cup C_2^x $, where
$$C_1^x \ = \ \left\{ \hat \xi ^{x,n^4}_{n^4} \ne \emptyset ,\; \vert  \hat \xi ^{x,n^4}_{n^4} \cap (x,x+n^4 /2)  \vert \leq h_1n^4 /2\right\} $$
and
$$C_2^x \ = \ \left\{\vert \hat \xi ^{x,n^4}_{n^4} \cap (x,x+n^4 /2)  \vert > h_1n^4 /2,\, \hat \xi ^{x,n^4}_{n^4} \cap \xi_0 = \emptyset \right \}.$$

The event $C_1^x$ is independent of $\xi_0$, and by Lemma \ref{lemreg}, $P(C_1^x) \ \leq \ \frac{1}{h_1} e^{-h_1n^4
/2} $.  We now remark that for $n $ large $(x,x+n^4/2) \subset [X_0-2^{4n},
X_0-2^{4n}] $ for all $x$ in the range of interest, and so $\xi _0 $ will have no $n^{3/2}$ vacant
intervals in this interval. Also by Lemma \ref{hit}, $P^{\xi_0}(C_2^x) \ \leq \  e^{-c_2h_1n^{5 /2}}$, uniformly over $x$ and $\xi_0$ under the given condition of no gaps, where $P^{\xi_0}$ refers to the conditional probability given $\xi_0$. Therefore uniformly for all such $\xi_0$
$$
P^{\xi_0} (B_1) \ \leq \ (2^{4n} + 1) \left( e^{-c_2h_1n^{5 /2}} +
\frac{1}{h_1} e^{-h_1n^4 /2} \right)
$$
which is less than $\frac{1}{2} e^{-n^2}$ for $n $ large.

As for the second bad event, its complement $B_2^c$ is given by
\begin{equation}
\label{bad2}
B_2^c=\left\{X^\prime_{n^4} \ = \ X_{n^4} \ \in \ [X_0-n^5 , X_0 + n^5]\right\}.
\end{equation}

Before estimating $P^{\xi_0}(B_2)$ uniformly over $\xi_0$ as above, 
notice that the desired properties  \eqref{coinc-1}, \eqref{coinc-2} hold on $B_1^c \cap B_2^c$. 
This is automatic for \eqref{coinc-2}. On the other hand, \eqref{coinc-1} follows from \eqref{bad1} and \eqref{bad2} once we
take {\it (ii)} into account. 

First we have by simple tail estimates for Poisson random variables,
$$
P^{\xi_0}( \sup_{0\le s \le n^4} (|X_s-X_0|  \vee |X^\prime_s-X^\prime_0|) > n^5) \le e^{-cn^5}
$$
for some strictly positive $c$ depending on $M'$ but not on $n$. So it
remains to argue that $X_{n^4}$ and $X^\prime_{n^4}$ must be equal with
very large probability. To do this it will suffice to show that
(outside a set of very small probability),
\begin{equation}
\label{coinc-3}
\xi_s(x) \ = \ \xi^ \prime _s(x) \ \text{ for all } 0 \leq  s \leq n^4, \text { for all } x \in [X_0-n^5-r_0, X_0+n^5+r_0]
\end{equation}
where $r_0$ was defined in the first paragraph.
We divide up $  [X_0-n^9 , X_0 + n^9)$ into disjoint intervals $
\{I_i\}_{i \in J}$ of cardinality $n^5$. 
Let the collection of indices of intervals entirely to the left of $[X_0-n^5-r_0 , X_0 + n^5+r_0]$ be $J_a$,
while $J_b$ denotes the collection of indices for intervals entirely to the right.
For any $i \in J$, let  $D_i $ be the event that there is a path from
$(x,0)$ to some $(y,n^4)$ entirely contained in space time rectangle
$I_i \times [0,n^4]$ for some $x$ with $\xi_0(x) = \xi_0^\prime (x) =
1$.  We note first that the events $D_i $ are conditionally
independent given $\xi_0 $ and that by our restriction on the size
of gaps for $\xi_0 $ we have that given $ \xi_0 $ each $D_i $ has a
conditional probability bounded away from zero in a way that depends
on $\lambda$ but not on $n$.   From this it follows that uniformly over $\xi_0$
without $n^{3/2}$ gaps as above, we have
$$
P^{\xi_0}\left( \cap _{i \in J_a} D_i^c \right) \ + \  P^{\xi_0}\left( \cap _{i \in
J_b} D_i^c \right) \leq  \ e^{-c^ \prime n^4 }.
$$
But \eqref{coinc-3} holds at once on the set $\cup_{i \in J_a} D_i \cap \cup _{i \in
J_b} D_i$, and we are done.

\end{proof}

\section{Existence of normalizing constants}
\label{sec-5}
\noindent In this section we wish to use our coupling time to establish the existence
of $\mu$ and $\alpha$ so that as $n \to \infty$

$$E\left( \frac{X_{2^{n}}}{2^{n}} \right) \to \mu
\ \mbox{ and } \frac{1}{2^{n}} E {(X_{2^{n}} -2^{n} \mu)}^{2} \to \alpha^{2}.$$

We first state the following general result, which is shown through basic techniques:
\begin{lemma}  \label{nogaps}
There exists a constant $c_{9} > 0$ so that, if the contact process $\xi $ is in equilibrium,
then for all $n$ large 
$$
P( \exists t \leq 2^{3n}, \ \vert x \vert  \leq 2.2^{4n} \mbox { so that } \xi_t \equiv 0
\mbox { on } (x, x+ n^{3/2} ) \ \leq \ e^{-c_{9}n^{3/2}}.
$$
\end{lemma}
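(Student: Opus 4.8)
The plan is to combine a union bound over a grid of times of fixed mesh with the elementary fact that a vacant stretch can only open up through healings --- a Poisson mechanism, hence rare on short time intervals --- together with a duality estimate for the equilibrium measure $\bar\nu$.

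First I would set $m=\lfloor n^{3/2}\rfloor$ and $W=[-2\cdot 2^{4n},\,2\cdot 2^{4n}+m]$, so that the event in the statement is contained in $A:=\{\exists\,t\le 2^{3n}\text{ and an interval }I\subset W\text{ of }m\text{ consecutive sites with }\xi_t\equiv 0\text{ on }I\}$. Fix a small $\delta>0$ (to be chosen in terms of the constant $c_1$ of Proposition~\ref{cpinterval}), then a constant $l_0>0$ with $1-e^{-l_0}\le \delta/2$, and put $s_j=jl_0$, $j=0,\dots,\lceil 2^{3n}/l_0\rceil$. If $A$ occurs, fix such $t,I$ and let $j$ be the index with $t\in[s_j,s_{j+1})$. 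Either $\xi_{s_j}$ has fewer than $\delta m$ occupied sites in $I$, or it has at least $\delta m$ of them; in the second case each such site $y$ has $\xi_{s_j}(y)=1$ but $\xi_t(y)=0$ with $t\in(s_j,s_{j+1}]$, and since an occupied site with no death mark above it on a time interval remains occupied throughout that interval, this forces $D^y\cap(s_j,s_{j+1}]\neq\emptyset$. Hence $A\subset B_1\cup B_2$, where $B_1$ is the event that for some $j$ the configuration $\xi_{s_j}$ has fewer than $\delta m$ occupied sites in some length-$m$ subinterval of $W$, and $B_2$ is the event that for some $j$ some length-$m$ subinterval $I\subset W$ contains at least $\delta m$ sites $y$ with $D^y\cap(s_j,s_{j+1}]\neq\emptyset$. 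As $l_0$ is constant, the number of indices $j$ and the number of length-$m$ subintervals of $W$ are each $e^{O(n)}=e^{o(m)}$.

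To bound $P(B_1)$ I would first note that $\bar\nu(\xi\equiv 0\text{ on }F)\le c_1^{-1}e^{-c_1|F|}$ for every finite $F\subset\Z$: by the graphical description of $\bar\nu$ recalled in Section~\ref{s:remind}, $\{\xi\equiv 0\text{ on }F\}$ is exactly the event that the dual $\hat\xi^{F,0}$ --- which equals $\bigcup_{x\in F}\hat\xi^{x,0}$ and has the law of the contact process started from $F$ --- dies out, and Proposition~\ref{cpinterval}(i) bounds this by $c_1^{-1}e^{-c_1|F|}$. By time-stationarity of $\bar\nu$ all the events in the union defining $B_1$ have equal probability, and whenever a configuration has fewer than $\delta m$ occupied sites in a given length-$m$ interval, some subset $F$ of that interval of size $\lceil(1-\delta)m\rceil$ is entirely vacant. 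A union bound over the $\le\binom{m}{\lfloor\delta m\rfloor}\le e^{H(\delta)m}$ choices of $F$ ($H$ the binary entropy), over the length-$m$ subintervals, and over $j$ then gives $P(B_1)\le e^{o(m)}e^{H(\delta)m}c_1^{-1}e^{-c_1(1-\delta)m}$, and since $H(\delta)\to0$ as $\delta\to0^+$ one can fix $\delta$ so that this is at most $e^{-c_1 m/3}$ for all large $n$.

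For $B_2$, fixing $j$ and a length-$m$ interval $I$, the events $\{D^y\cap(s_j,s_{j+1}]\neq\emptyset\}$, $y\in I$, are independent with common probability $1-e^{-l_0}\le \delta/2$, so their number is stochastically dominated by a $\mathrm{Bin}(m,\delta/2)$ variable, of mean $\delta m/2$; a Chernoff bound makes the probability it exceeds $\delta m$ at most $e^{-c(\delta)m}$, and summing over the $e^{o(m)}$ indices $j$ and intervals $I$ gives $P(B_2)\le e^{-c(\delta)m/2}$ for all large $n$. Combining, $P(A)\le P(B_1)+P(B_2)\le e^{-c_9 m}$ for a suitable $c_9>0$ and all large $n$, as required. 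I expect the only real obstacle to be the continuum of times $t$: one cannot simply reduce to finitely many configurations, and the resolution is that a gap can be created only by healings, so over a time interval of bounded length a linear-in-$m$ number of independent clocks must ring --- which works precisely because ``near-emptiness'' at the grid times is measured through a density deficit (the event $B_1$), not through a genuine gap, keeping the complementary event $B_2$ exponentially small in $m$.
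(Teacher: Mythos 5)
Your argument is correct, and it fills a genuine omission: the paper states Lemma~\ref{nogaps} with the remark that it follows ``through basic techniques'' and gives no proof, so there is no route of the authors' to compare against. Your proof is exactly the kind of elementary argument intended, and every ingredient you use is available in the paper: the representation of the upper equilibrium via duals that survive forever together with Proposition~\ref{cpinterval}(i) gives $\bar\nu(\xi\equiv 0 \mbox{ on } F)\le c_1^{-1}e^{-c_1|F|}$ for finite $F$; stationarity handles the grid times; and the observation that a site can only switch from occupied to vacant through a death mark, with the death clocks being independent rate-one Poisson processes, handles the continuum of intermediate times. The key structural point — that at grid times you demand only a density deficit (controlled by the entropy-versus-$e^{-c_1(1-\delta)m}$ tradeoff for small $\delta$) rather than a full gap, so that the between-grid event requires order $\delta m$ independent clocks to ring in a window of length $l_0$ — is what makes the union over the $e^{O(n)}=e^{o(n^{3/2})}$ grid times and spatial positions harmless, and you execute it correctly. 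The only blemish is a trivial rounding matter: the interval $(x,x+n^{3/2})$ may contain only $\lfloor n^{3/2}\rfloor-1$ integer sites, so your $m$ should be taken as $\lfloor n^{3/2}\rfloor-1$ (or the containment stated with $m-1$); this shifts nothing in the exponential estimate.
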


Considering $(\xi', X')$ associated to a regeneration time $\sigma=\sigma(n,T)$ for $T=0$ as in
the previous section, and $\nu=\nu(n)$ the measure defined in Section \ref{sec-ac} we will need:

\begin{lemma}  \label{base}
There exists a constant $c_{10} < \infty$ so that for all $n$,
$$
\big \vert E\left( \frac{X_{2^{n}}}{2^{n}} \right)  \ - \  E^{\nu(n)}\left( \frac{X_{2^{n}}}{2^{n}} \right)
\big \vert \quad < \quad \frac{c_{10}n^8 e^{M^{\prime}\log^4 (n)}}{2^{n}} .
$$
\end{lemma}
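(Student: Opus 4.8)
The plan is to compare $E(X_{2^n}/2^n)$ with $E^{\nu(n)}(X_{2^n}/2^n)$ by running the true dynamic random walk $(\xi,X)$ up to its first regeneration time $\sigma = \sigma(n,0)$ and then coupling the continuation with a dynamic random walk started from the regenerated configuration $(\xi'_\sigma, X'_\sigma)$, which by construction has law $\nu(n)$ relative to $X_\sigma$. First I would write
$$
X_{2^n} \;=\; X_\sigma \;+\; (X_{2^n} - X_\sigma),
$$
and note that, on the complement of a disaster and away from $n^{3/2}$ gaps, Lemma \ref{noencroach} (applied iteratively over blocks of length $n^4$, together with Lemma \ref{nogaps} and Corollary \ref{good1} to propagate the no-gap condition) forces $X_s = X'_s$ for all $s \in [\sigma, 2^n]$, where $(\xi',X')$ is the walk continued from the regenerated data. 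Hence $X_{2^n} - X_\sigma$ equals the increment of a $\nu(n)$-started walk over a time interval of length $2^n - \sigma$, which differs from $2^n$ only by the small quantity $\sigma \le n^8 e^{M'\log^4 n}$ with overwhelming probability by Proposition \ref{gencontrol}.

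The key steps, in order, would be: (1) bound $|X_\sigma - X_0|$: since $X$ jumps at rate at most $M'$, on $\{\sigma \le n^8 e^{M'\log^4 n}\}$ we have $|X_\sigma|$ of order at most $\sigma$ up to a Poisson tail, so $E|X_\sigma| = O(n^8 e^{M'\log^4 n})$ once we absorb the negligible contribution of $\{\sigma > n^8 e^{M'\log^4 n}\}$ using the $e^{-c_6 n^3}$ bound of Proposition \ref{gencontrol} and crude $L^2$ control of $|X_\sigma|$ there; (2) on the good event (no disaster at $\sigma$, and no $n^{3/2}$ gap anywhere in the relevant space–time window, which has probability $1 - e^{-cn^{3/2}}$ by Lemma \ref{nogaps} and the fifth-stage success probability, and with the coupling of Lemma \ref{noencroach} valid throughout $[\sigma, 2^n]$), identify $X_{2^n} - X_\sigma$ in distribution with $X^{\nu(n)}_{2^n - \sigma}$, the displacement of a walk from a $\nu(n)$-configuration; (3) compare $E(X^{\nu(n)}_{2^n-\sigma})$ with $E(X^{\nu(n)}_{2^n}) = E^{\nu(n)}(X_{2^n})$: the difference is the expected displacement over a time window of length $\sigma$, again bounded by $O(E\sigma) = O(n^8 e^{M'\log^4 n})$ by the rate-$M'$ bound; (4) on the bad events, bound the contribution to $|E X_{2^n}|$ and $|E^{\nu(n)} X_{2^n}|$ crudely by $2^n \cdot P(\text{bad}) \le 2^n e^{-cn^{3/2}}$, which is $o(1)$ and in particular absorbed into the stated bound. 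Collecting these and dividing by $2^n$ gives the claimed estimate with a suitable $c_{10}$.

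The main obstacle I expect is step (2): making precise that conditioning on a \emph{successful} regeneration (rather than on $\sigma$ alone) leaves the post-$\sigma$ Harris system with the correct conditional law so that $(\xi'_\sigma, X'_\sigma)$ genuinely starts a fresh dynamic random walk with environment law $\nu(n)$ — this is exactly the point of the fifth-stage randomization with the auxiliary variable $U$, and one must check that the acceptance rule does not bias the future of the Harris system on the region $[X_\sigma - n^9, X_\sigma + n^9]^c$ and beyond, so that Lemma \ref{noencroach} can legitimately be applied to $(\xi_\sigma,\xi'_\sigma)$ with the \emph{same} continuation Harris system. A secondary technical nuisance is chaining Lemma \ref{noencroach} over the $O(2^n/n^4)$ successive blocks needed to reach time $2^n$ while keeping the no-gap hypothesis alive: here one invokes Corollary \ref{good1} (or Lemma \ref{nogaps}) at each block and sums the $e^{-c\log^{3/2}n}$-type error probabilities, which is why the spatial windows in Lemma \ref{noencroach} were taken as large as $2^{3n}$ and $2^{4n}$ — they comfortably contain everything $X$ can reach by time $2^n$. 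Everything else is routine Poisson tail estimation.
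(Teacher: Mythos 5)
Your proposal is essentially the paper's proof: regenerate at $\sigma=\sigma(n,0)$, use Proposition \ref{gencontrol} and Lemmas \ref{nogaps} and \ref{noencroach} to couple the post-$\sigma$ increment of $X$ with the increment of the $\nu(n)$-started walk $X'$, and charge the discrepancy to the rate-$M'$ Poisson jumps over windows of length at most $n^8e^{M'\log^4(n)}$; the only cosmetic difference is that the paper compares $X_{2^n}$ with the coupled increment over $[\sigma,\sigma+2^n]$ (same length $2^n$), which makes your time-shift correction in step (3) unnecessary but changes nothing substantive. One small fix: since $|X_{2^n}|$ is bounded only by the Poisson count $N^X(2^n)$ and not by $2^n$, the bad-event contribution should be handled by Cauchy--Schwarz, $E\left(|X_{2^n}|\,I_D\right)\le \left(E X_{2^n}^2\right)^{1/2}P(D)^{1/2}$, exactly as the paper does, rather than by the stated bound $2^n\,P(\mathrm{bad})$.
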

\begin{proof}
Let $\sigma = \sigma(n,0) $ be the $n$ order regeneration time for time $0$ and  $X'$ the dynamic random walk resulting
from $ \sigma $. Let $D$ denote the event
that either\\
(i) $\sigma > n^8e^{M^{\prime}\log^4 (n)} $ or \\
(ii) $X_{\sigma + 2^n} - X_\sigma \ \ne \ X^\prime _{\sigma + 2^n} - X^\prime_\sigma$. \\
By Lemmas \ref{noencroach} and  \ref{nogaps} and Proposition \ref{gencontrol} we have
that $P(D) < e^{-c_9n^{3/2}} +  e^{-c_8n^{2}}+ e^{-c_6n^{3}}$. But
$$
\big \vert  E\left(X_{2^{n}}\right)  \ - \  E^{\nu(n)}\left( X_{2^{n}}\right)\big \vert \ = \
\big \vert  E\left(X_{2^{n}}\right)  \ - \ E\left(X^\prime_{\sigma +2^{n}}-X^\prime_{\sigma}\right)\big \vert
$$
$$
\le E\left( |X_{2^{n}} - (X_{\sigma +2^{n}}-X_{\sigma})|I_{D^c}\right) +
E \left(\big \vert X_{2^{n}}  \ - \ \left(X^\prime_{\sigma +2^{n}}-X^\prime_{\sigma}\right)\big \vert I_D\right).
$$
The first term is bounded by $E\left(N^X(n^8 e^{M\log^4(n)})+ N^X(2^{n}+n^8 e^{M\log^4(n)})-N^X(2^{n})\right)$,
which is $2M^{\prime} n^8 e^{M^{\prime}\log^4(n)}$. For the term containing $I_D$ we use the Cauchy Schwarz inequality to conclude
the proof.
\end{proof}

\begin{lemma}  \label{exp}
There exists a constant $c_{11} < \infty$ so that for all $n$,
$$
\big \vert E\left( \frac{X_{2^{n}}}{2^{n}} \right)  \ - 
\  E\left( \frac{X_{2^{n+1}}}{2^{n+1}} \right) \big \vert \quad <
\quad \frac{c_{11}n^8e^{M^{\prime}\log^4 (n)} }{2^{n}} .
$$
\end{lemma}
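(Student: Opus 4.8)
The plan is to deduce this from Lemma \ref{base} by running the regeneration construction once more, this time started at the stopping time $T=2^{n}$. Write
$$
\frac{X_{2^{n+1}}}{2^{n+1}} \;=\; \frac12\cdot\frac{X_{2^{n}}}{2^{n}} \;+\; \frac12\cdot\frac{X_{2^{n+1}}-X_{2^{n}}}{2^{n}},
$$
so that
$$
\Big|E\Big(\frac{X_{2^{n}}}{2^{n}}\Big)-E\Big(\frac{X_{2^{n+1}}}{2^{n+1}}\Big)\Big| \;=\; \frac{1}{2^{n+1}}\,\big|E(X_{2^{n}})-E(X_{2^{n+1}}-X_{2^{n}})\big|.
$$
Since Lemma \ref{base} gives $|E(X_{2^{n}})-E^{\nu(n)}(X_{2^{n}})|<c_{10}\,n^{8}e^{M^{\prime}\log^{4}(n)}$, it suffices to show $|E(X_{2^{n+1}}-X_{2^{n}})-E^{\nu(n)}(X_{2^{n}})|\le c\,n^{8}e^{M^{\prime}\log^{4}(n)}$ for some constant $c$; the triangle inequality then yields the claim with $c_{11}=(c_{10}+c)/2$.

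For the displacement over $[2^{n},2^{n+1}]$ I would argue exactly as in the proof of Lemma \ref{base}, only with the time origin at $2^{n}$. Let $\sigma=\sigma(n,2^{n})$ be the end of the first successful run initiated at or after time $2^{n}$, and let $(\xi^{\prime},X^{\prime})$ be the dynamic random walk produced there, coupled to $(\xi,X)$ through the same Harris system, with $X^{\prime}_{\sigma}=X_{\sigma}$ and $\theta_{X_{\sigma}}\xi^{\prime}_{\sigma}$ distributed according to $\nu(n)$. Because $\sigma$ is a stopping time and $\xi^{\prime}_{\sigma}$ is measurable with respect to the information up to $\sigma$ (Harris system and auxiliary variables), the post-$\sigma$ pair $(\theta_{X^{\prime}_{\sigma}}\xi^{\prime}_{\sigma+\cdot},\,X^{\prime}_{\sigma+\cdot}-X^{\prime}_{\sigma})$ is a dynamic random walk started from $\nu(n)$, and hence $E(X^{\prime}_{\sigma+2^{n}}-X^{\prime}_{\sigma})=E^{\nu(n)}(X_{2^{n}})$ by translation invariance of the jump rates. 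Let $D$ be the event that either $\sigma>2^{n}+n^{8}e^{M^{\prime}\log^{4}(n)}$ or $X_{\sigma+2^{n}}-X_{\sigma}\neq X^{\prime}_{\sigma+2^{n}}-X^{\prime}_{\sigma}$; by Proposition \ref{gencontrol}, Lemma \ref{nogaps} and the iteration of Lemma \ref{noencroach} over the blocks of length $n^{4}$ covering $[\sigma,\sigma+2^{n}]$ (precisely as in Lemma \ref{base}), $P(D)\le e^{-c_{6}n^{3}}+e^{-c_{8}n^{2}}+e^{-c_{9}n^{3/2}}$. On $D^{c}$ one has $X_{\sigma+2^{n}}-X_{\sigma}=X^{\prime}_{\sigma+2^{n}}-X^{\prime}_{\sigma}$, and $X_{2^{n+1}}-X_{2^{n}}$ differs from it only through the jumps of $N^{X}$ in the two intervals $[2^{n},\sigma]$ and $[2^{n+1},\sigma+2^{n}]$, each of length at most $n^{8}e^{M^{\prime}\log^{4}(n)}$ on $D^{c}$, which contributes at most $2M^{\prime}n^{8}e^{M^{\prime}\log^{4}(n)}$ to the expectation. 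On $D$ one uses Cauchy--Schwarz together with the crude Poisson second-moment bounds $E[(X_{2^{n+1}}-X_{2^{n}})^{2}],\,E[(X^{\prime}_{\sigma+2^{n}}-X^{\prime}_{\sigma})^{2}]=O(2^{2n})$ and the super-exponential smallness of $P(D)$, so that the $D$-contribution is $o(1)$. Adding the pieces and recalling $E(X^{\prime}_{\sigma+2^{n}}-X^{\prime}_{\sigma})=E^{\nu(n)}(X_{2^{n}})$ gives the required bound on $|E(X_{2^{n+1}}-X_{2^{n}})-E^{\nu(n)}(X_{2^{n}})|$.

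The one point to keep an eye on is that $\sigma+2^{n}$ slightly overshoots $2^{n+1}$, but only by $\sigma-2^{n}\le n^{8}e^{M^{\prime}\log^{4}(n)}=o(2^{n})$ on $D^{c}$, so the overshoot is harmless and is absorbed into the Poisson estimate above; this is why the comparison is routed through $X_{\sigma+2^{n}}-X_{\sigma}$ rather than through $X_{2^{n+1}}-X_{2^{n}}$ directly. Beyond this bookkeeping the argument presents no real obstacle: it is simply the time-translate of the proof of Lemma \ref{base}, and the only thing to be careful about is that every error term appears at the common scale $n^{8}e^{M^{\prime}\log^{4}(n)}$, which is $o(2^{n})$ and therefore does not spoil the $2^{-n}$ normalization.
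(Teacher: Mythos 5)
Your proposal is correct and follows essentially the same route as the paper: it introduces the regeneration time $\sigma(n,2^{n})$ after time $2^{n}$, uses $E(X^{\prime}_{\sigma+2^{n}}-X^{\prime}_{\sigma})=E^{\nu(n)}(X_{2^{n}})$ together with the event $D$ (controlled by Proposition \ref{gencontrol}, Lemmas \ref{nogaps} and \ref{noencroach}, Poisson bounds and Cauchy--Schwarz), and then invokes Lemma \ref{base}, exactly as the paper does. The only difference is cosmetic bookkeeping: you compare $E(X_{2^{n+1}}-X_{2^{n}})$ with $E^{\nu(n)}(X_{2^{n}})$ directly, whereas the paper telescopes $EX_{2^{n+1}}$ through the times $2^{n},\theta,\theta+2^{n},2^{n+1}$.
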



\begin{proof}
\noindent We begin by now taking $\theta=\sigma(n,2^n)$ to be the regeneration time after $2 ^n$, 
and argue as in the proof of 
Lemma \ref{base}, where we now take the event $D$ that either\\
(i) $\theta -2^n> n^8e^{M^{\prime}\log^4 (n)} $ or \\
(ii) $X_{\theta+ 2^n} - X_\theta \ \ne \ X^\prime _{\theta + 2^n} - X^\prime_\theta$.

\noindent As in the proof of Lemma \ref{base}, $D$ has a very small probability. 
Proceeding them as in that proof, and since $E(X^{\prime}_{\theta + 2^n} - X^\prime_\theta) = 
E^{\nu(n)}(X_{2^n})$ 
we can write
\begin{eqnarray*}
EX_{2^{n+1}} &=&E X_{2^{n}} + E(X_{\theta} -X_{2^{n}}) + E(X_{\theta + 2^{n}} - X_{\theta}) + E(X_{2^{n+1}}- X_{\theta + 2^{n}})\\
&=& 2 E X_{2^{n}} + O (n^{8}e^{M^{\prime}\log^4 (n)}) + E(X_{\theta}-X_{2^{n}}) + E(X_{2^{n+1}}- X_{\theta + 2^{n}}) 
\\
&=& 2EX_{2^{n}}+ O (n^{8}e^{M^{\prime}\log^4 (n)})
\end{eqnarray*}
where in the second equality, we have also used Lemma \ref{base}. 
\noindent From this the lemma follows.
\end{proof}

This immediately begets
\begin{corollary} \label{comu}
There exists $\mu \in (-\infty, \infty )  $ so that
$$
\lim_{n \rightarrow \infty}  \frac{E(X_{2^n})}{2^n} \quad = \quad \mu .
$$
\end{corollary}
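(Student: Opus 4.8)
The plan is to deduce Corollary \ref{comu} directly from Lemma \ref{exp} by showing that the sequence $a_n := E(X_{2^n})/2^n$ is Cauchy. Indeed, Lemma \ref{exp} gives $|a_{n+1}-a_n| < c_{11} n^8 e^{M^\prime \log^4(n)}/2^n$ for all $n$, and since $\log^4(n) = o(\log 2)\cdot n$ — more precisely $n^8 e^{M^\prime \log^4(n)} = e^{o(n)}$ while $2^n = e^{n\log 2}$ — the right-hand side is summable in $n$. Hence for $m > n$,
$$
|a_m - a_n| \ \le \ \sum_{k=n}^{m-1} |a_{k+1}-a_k| \ \le \ \sum_{k \ge n} \frac{c_{11} k^8 e^{M^\prime \log^4(k)}}{2^k} \ \xrightarrow[n\to\infty]{} \ 0,
$$
so $(a_n)$ is a Cauchy sequence in $\R$ and therefore converges to some limit $\mu$. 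The fact that $\mu$ is finite (i.e. $\mu \in (-\infty,\infty)$) is automatic from convergence of a real sequence, but can also be seen crudely from the a priori bound $|X_t| \le N^X(t)$, which gives $|a_n| = |E(X_{2^n})|/2^n \le E(N^X(2^n))/2^n = M^\prime$ for every $n$.

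The only point requiring the slightest care is the elementary estimate that $n^8 e^{M^\prime \log^4(n)}/2^n$ is the general term of a convergent series; this is where the specific form of the error in Lemma \ref{exp} matters, and it is the reason the regeneration time scale $e^{M^\prime \log^4(n)}$ was chosen to be sub-exponential in $n$. Concretely, $\log(n^8 e^{M^\prime\log^4(n)}/2^n) = 8\log n + M^\prime \log^4(n) - n\log 2 \to -\infty$, and in fact the ratio of consecutive terms tends to $1/2 < 1$, so the series converges by the ratio test. I do not anticipate any genuine obstacle here — the substance of the argument is entirely contained in Lemma \ref{exp}, and this corollary is just the passage from a telescoping bound to a limit.

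I would write the proof as follows.

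\begin{proof}
Write $a_n = E(X_{2^n})/2^n$. Since $\log^4(n) = o(n)$, we have $n^8 e^{M^\prime \log^4(n)} = o(2^n)$; in fact the ratio of the $(n+1)$-st to the $n$-th term of the sequence $\big(n^8 e^{M^\prime\log^4(n)}/2^n\big)_n$ converges to $1/2$, so by the ratio test
$$
\sum_{n} \frac{c_{11}\, n^8 e^{M^\prime \log^4(n)}}{2^n} \ < \ \infty .
$$
By Lemma \ref{exp}, $|a_{n+1}-a_n| < c_{11}\, n^8 e^{M^\prime \log^4(n)}/2^n$, and hence for $m > n$
$$
|a_m - a_n| \ \le \ \sum_{k=n}^{m-1} |a_{k+1}-a_k| \ \le \ \sum_{k \ge n} \frac{c_{11}\, k^8 e^{M^\prime \log^4(k)}}{2^k},
$$
which tends to $0$ as $n \to \infty$. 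Thus $(a_n)$ is Cauchy and converges to some $\mu \in \R$. (Finiteness is also clear since $|a_n| \le E(N^X(2^n))/2^n = M^\prime$ for all $n$.)
\end{proof}
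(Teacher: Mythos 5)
Your argument is correct and is exactly what the paper intends: the corollary is stated as an immediate consequence of Lemma \ref{exp}, the implicit reasoning being precisely your telescoping/Cauchy argument using the summability of $n^8 e^{M^\prime\log^4(n)}/2^n$. Nothing further is needed.
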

\vspace{1cm}
{\it Remark: } It is not difficult to see that $E(X_t/t)$ converges to $\mu$, see for instance the
proof of Lemma \ref{l2bound}.

\noindent We now look for  a bound for $E {\left(\frac{X_{2^{2n}}}{2^{2n}} - \mu\right)}^{2}   $.

\noindent As we have seen $\mu =\lim_{n \to \infty} \frac{E \left( X_{2^{n}} \right)}{2^{n}}$ exists.
Furthermore by Lemma \ref{base} for \\
$\mu_{n} = \frac{E^{\nu(n)}\left( X_{2^{n}}\right)}{2^{n}} $ we have
$\vert \mu_{n} - \mu \vert  \leq \frac{c_{10} n^{8}e^{M^{\prime}\log^4 (n)}}{2^{n}}.$

\vspace{0.3cm}

Given a dynamic random walk $(\xi, X) $ and a scale $n$, we define a sequence of renewal points
$\beta_i, \ i \geq 1$ as follows: let $ \beta_1 \ = \ \sigma (0)$, the regeneration time for the
stopping time $0$. Subsequently for $i \geq 1$, we define $\beta_{i+1}$ so that $\beta_{i+1} - \beta_i$ is the
regeneration time for the stopping time $2^n$ for the dynamic random walk $(\xi^{i}, X^{i}) $ so that \\
(i) $(\xi^{i}, X^{i}) $ is generated by the Harris system temporally shifted by $\beta_i $;\\
(ii) for $\vert x- X^i_{\beta_{i+1}-\beta_i} \vert  \leq n^9 , \  \ \xi^{i+1} _0(x) =  (\xi^{i}) ^\prime  _{\beta_{i+1} - \beta_{i
} }(x) $; elsewhere  $\xi^{i+1} _0(x) = 1$;\\
(iii) $ X^{i+1}_0 \ = \ X^i_{\beta_{i+1}- \beta_i} $.

 We wish to only deal with ``good" $i$, where we say that $i$ is good if all $0 \leq j <i$ are good and if \\
(a) $ \beta_{i+1} - \beta_i \leq \ 2^n + n^8e^{M^{\prime}\log^4 (n)}$\\
(b) $N^X (\beta_{i}) - N^X (\beta_{i-1})  < 2n^2 2^n $, where for notational completeness, we take $\beta_0=0$ and
$(\xi^{0}, X^{0}) $ to be the original dynamic random walk $(\xi, X) $.

Setting $S = \inf \{i: i \mbox { is not good} \}$, we define the random variables $Z_i, \ i \geq 1$ by\\
$\bullet$ for $j < S, \ Z_j = X^{j}_{\beta_{j+1}-\beta_{j}} - X^{j}_0$,\\
$\bullet$ for $j \geq S, \ Z_j$ are taken from an independent i.i.d. sequence of random variables with the distribution that of $Z_1$
conditioned on $1$ being good.

We note that unless a disaster occurs at some stage $\beta_j$
(i.e. $\xi^{j}_0(x) \neq \xi^{j-1}_{\beta_j- \beta_{j-1}}(x)$ for some $x$ within $n^9$ of
$X^{j-1}_{\beta_j- \beta_{j-1}} $) we have for each $i$, $\xi^ {i}_0 \geq \xi_{\beta_{i}}$.
Thus we have via Proposition \ref{gencontrol} and Lemmas \ref {noencroach}  and \ref {nogaps}, and simple
estimate with the Poisson distribution, that outside a set of probability 
$2^n (e^{-c_{9}n^{3/2}} +e^{-c_8n^2}+e^{-cn^2}+ e^{-c_6n^3})$, for some $c>0$ that depends on 
$M^{\prime}$, all $i \leq 2^n $ are good and for such $i$ we have
$$
X_{\beta_i}- X_{\beta_1}  \ = \ \sum_{k=1}^{i-1}Z_k.
$$

Take $R$ to be the integer part of $\frac{2^{2n}}{2^n + n^8e^{M^{\prime} \log^4 (n)}} - 2$ and let us define
$$
Y_{2^{2n}}  = X_{\beta_1} \ + \ \sum_{j=1}^R Z_j \ + \ (X_{2^{2n}} - X_{\beta_{R+1}} ) \ \equiv \sum_{j=1}^R Z_j +F_n.
$$
As noted, outside probability $2^n (e^{-c_{9}n^{3/2}} +e^{-c_8n^2}+e^{-cn^2}+ e^{-c_6n^3}), \ Y_{2^{2n}}  =  X_{2^{2n}}$.

Now, by techniques already employed for Lemma \ref{base}, it is easy to see that for suitable universal $c_{12}$, we have
$$
\vert E(Z_1) - 2^n \mu    \vert \leq \ c_{12} n^8e^{M^{\prime}\log^4 (n)}
$$
and so we may write (with $Z_j^\prime$ equal to $Z_j $ minus its expectation)
$$
Y_{2^{2n}}-2^{2n} \mu   =  \ \sum_{j=1}^R Z_j ^\prime \ + \ F_n^\prime.
$$
where $E((F_n^\prime)^2) \leq c n^{16}e^{2M^{\prime}\log^4 (n)}2^{2n}$ for universal $c$. 
From this and the obvious bound $E(Z_j^2) \leq \tilde c 2^{2n}$
we obtain, for some universal $C$, that
$$
E(Y_{2^{2n}}-2^{2n} \mu)^2 \ \leq \ C 2^{3n}.
$$

We finally use the elementary identity
$$
E(X_{2^{2n}}-2^{2n} \mu)^2 \ = \ E(Y_{2^{2n}}-2^{2n} \mu)^2 \ + \
E\left(((X_{2^{2n}}-2^{2n} \mu)^2-(Y_{2^{2n}}-2^{2n} \mu)^2) I_{X_{2^{2n}} \ne Y_{2^{2n}}} \right)
$$
and Cauchy Schwarz to conclude

\vspace{0.3cm}

\begin{lemma}  \label{basic}
There exists universal constant $c_{13}$ so that for all positive integer $n$,
$E({(X_{2^{2n}}- 2^{2n}\mu)}^{2}) \leq c_{13} 2^{3n}$.
\end{lemma}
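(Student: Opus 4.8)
The plan is to bound $E((X_{2^{2n}}-2^{2n}\mu)^2)$ by comparing $X_{2^{2n}}$ with the random variable $Y_{2^{2n}}=\sum_{j=1}^R Z_j + F_n$ built from the renewal decomposition described just above the statement, and then to control the $L^2$ norm of $Y_{2^{2n}}-2^{2n}\mu$ together with the small correction coming from the (rare) event $\{X_{2^{2n}}\neq Y_{2^{2n}}\}$. First I would record that, by the construction of the $\beta_i$ and the $Z_j$, the variables $\{Z_j\}_{j\geq 1}$ are i.i.d.\ with the law of $Z_1$ conditioned on $1$ being good (this is exactly how the tail of the sequence was defined, and on the good event the first $R$ of them agree with the genuine increments $X^j_{\beta_{j+1}-\beta_j}-X^j_0$). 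Hence $\sum_{j=1}^R Z_j^\prime$ is a sum of $R$ i.i.d.\ centered variables, so $E((\sum_{j=1}^R Z_j^\prime)^2)=R\,\Var(Z_1)\leq R\,E(Z_1^2)$.

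Next I would estimate $E(Z_1^2)$. Since on the good event a single renewal block has length at most $2^n+n^8 e^{M^\prime\log^4 n}$ and $X$ moves at rate at most $M^\prime$, a crude Poisson tail bound gives $E(Z_j^2)\leq \tilde c\, 2^{2n}$ for large $n$ (the conditioning on ``$1$ is good'' only multiplies by a bounded factor, since by Lemma~\ref{reg-lemma} and Proposition~\ref{gencontrol} that event has probability bounded away from $0$). With $R$ of order $2^n$, this yields $E((\sum_{j=1}^R Z_j^\prime)^2)\leq C'\,2^{3n}$. For the boundary term $F_n=X_{\beta_1}+(X_{2^{2n}}-X_{\beta_{R+1}})$, I would use that $\beta_1=\sigma(0)\leq n^8 e^{M^\prime\log^4 n}$ outside a set of probability $e^{-c_6 n^3}$ (Proposition~\ref{gencontrol}) and that, by choice of $R$, the leftover time $2^{2n}-\beta_{R+1}$ is $O(2^n)$ on the good event; combined with Cauchy--Schwarz against the small bad probabilities this gives $E((F_n^\prime)^2)\leq c\,n^{16}e^{2M^\prime\log^4 n}\,2^{2n}$, which is $o(2^{3n})$. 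Adding the two contributions (and using $E(AB)\leq \tfrac12(E A^2+EB^2)$ for the cross term) produces $E((Y_{2^{2n}}-2^{2n}\mu)^2)\leq C\,2^{3n}$; here I would also need $|E(Z_1)-2^n\mu|\leq c_{12}n^8 e^{M^\prime\log^4 n}$, proved exactly as in Lemma~\ref{base}, so that $R\,E(Z_1)$ is within $O(2^n\cdot n^8 e^{M^\prime\log^4 n})$ of $2^{2n}\mu$ and absorbing the discrepancy keeps us at order $2^{3n}$.

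Finally I would pass from $Y_{2^{2n}}$ to $X_{2^{2n}}$. The event $\{X_{2^{2n}}\neq Y_{2^{2n}}\}$ is contained in the bad set controlled above — a disaster at some $\beta_j$, a too-long regeneration, a large Poisson increment, or an $n^{3/2}$ gap near the walk — with total probability at most $2^n(e^{-c_9 n^{3/2}}+e^{-c_8 n^2}+e^{-cn^2}+e^{-c_6 n^3})$, via Proposition~\ref{gencontrol} and Lemmas~\ref{noencroach}, \ref{nogaps}. Using the elementary identity displayed before the statement together with Cauchy--Schwarz, the correction term is bounded by $\sqrt{E((X_{2^{2n}}-2^{2n}\mu)^4+(Y_{2^{2n}}-2^{2n}\mu)^4)}\cdot \sqrt{P(X_{2^{2n}}\neq Y_{2^{2n}})}$, and since fourth moments of $X_{2^{2n}}$ and $Y_{2^{2n}}$ grow only polynomially/stretched-exponentially while the bad probability decays like $e^{-c n^{3/2}}$, this correction is negligible compared to $2^{3n}$. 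The main obstacle I anticipate is the bookkeeping around the conditioning defining the $Z_j$'s: one must check carefully that the substituted i.i.d.\ tail, the normalization of $E(Z_1)$, and the event that all $i\leq 2^n$ are good all mesh so that $\sum_{j=1}^R Z_j^\prime$ is genuinely a clean i.i.d.\ sum whose second moment is $R\Var(Z_1)$, with every discrepancy pushed into sets whose probability beats any polynomial-in-$n$ times $2^n$ prefactor.
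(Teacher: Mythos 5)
Your proposal follows the paper's own argument essentially verbatim: the same renewal decomposition $Y_{2^{2n}}=\sum_{j\le R}Z_j+F_n$ with $R\approx 2^n$, the same centering via $|E(Z_1)-2^n\mu|\le c_{12}n^8e^{M^\prime\log^4(n)}$ together with the crude bound $E(Z_1^2)\le\tilde c\,2^{2n}$ and the estimate on $F_n^\prime$, and the same final passage from $Y_{2^{2n}}$ to $X_{2^{2n}}$ via the elementary identity, Cauchy--Schwarz, and the stretched-exponentially small probability of $\{X_{2^{2n}}\ne Y_{2^{2n}}\}$. It is correct and takes essentially the same route; the only difference is that you spell out explicitly the uncorrelated/i.i.d.\ structure of the centered $Z_j^\prime$ (and the moment bounds needed for the Cauchy--Schwarz step), which the paper leaves implicit.
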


\noindent We can now prove

\begin{proposition} \label{var}
\noindent For $X$ as defined above, there exists  $\alpha \in (0, \infty)$ so that as $n \to \infty$
$$2^{n} E {\left( \frac{X_{2^{n}}}{2^{n}}- \mu \right)}^{2} \to \alpha^{2}.$$
\end{proposition}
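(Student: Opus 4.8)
The plan is to establish that the sequence $a_n := 2^n E\big((X_{2^n}/2^n - \mu)^2\big) = 2^{-n} E\big((X_{2^n} - 2^n\mu)^2\big)$ is Cauchy, and that its limit is strictly positive. For the Cauchy part I would compare scale $n$ and scale $2n$ exactly as in the analysis preceding Lemma \ref{basic}: decompose $X_{2^{2n}} - 2^{2n}\mu$ (up to the negligible event $\{X_{2^{2n}} \neq Y_{2^{2n}}\}$, handled by Cauchy--Schwarz together with the crude bound $E(X_{2^{2n}}^2) = O(2^{4n})$) as $\sum_{j=1}^R Z_j' + F_n'$, where the $Z_j'$ are i.i.d.\ centred copies of $Z_1 - E(Z_1)$, $R \sim 2^n$, and $E((F_n')^2) = O(n^{16} e^{2M'\log^4 n} 2^{2n}) = o(2^{3n})$. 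Then
$$
E\big((X_{2^{2n}} - 2^{2n}\mu)^2\big) = R\, E\big((Z_1')^2\big) + O(2^{2n} n^{8} e^{M'\log^4 n}\cdot 2^{n}) + O(2^{3n}/\text{something}),
$$
the cross terms vanishing by independence and centredness, so $2^{-2n} E\big((X_{2^{2n}} - 2^{2n}\mu)^2\big) = 2^{-2n} R\, E((Z_1')^2) + o(1)$. Meanwhile, again following the proof of Lemma \ref{base}, one shows $E((Z_1')^2)$ is within $O(n^{8} e^{M'\log^4 n})$ of $E^{\nu(n)}\big((X_{2^n} - 2^n\mu)^2\big)$, which in turn (since $\nu(n)$ is equilibrium conditioned on an event of probability bounded below, and using Lemma \ref{base}-type bounds to replace $\mu$ by $\mu_n$) is within $o(2^n)$ of $2^n a_n$; together with $2^{-2n} R \to 1$ this yields $a_{2n} = a_n + o(1)$. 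Iterating the one-step estimate $|a_{n+1} - a_n| = o(1)$ — obtained by the same doubling comparison between $2^n$ and $2^{n+1}$ as in Lemma \ref{exp}, now at the level of second moments rather than first — shows $(a_n)$ converges to some $\alpha^2 \in [0,\infty)$; Lemma \ref{basic} already gives $a_n = O(2^n)$, which is more than enough to run these comparisons.

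The remaining, and genuinely delicate, point is that $\alpha^2 > 0$, i.e.\ that $X$ does not have vanishing asymptotic variance. This is where I expect the main obstacle to lie, since it is the one place where the hypothesis that the $g_i$ are \emph{non-trivial} must be used — everything above goes through verbatim for $g_i \equiv 0$. The idea is to exhibit, with probability bounded away from $0$ uniformly in $n$, two disjoint Harris-system configurations on a block of length and time of order $2^n$ that (a) look like equilibrium near the walker and (b) force the increment $Z_1$ of the walk over that block to differ by an amount of order $2^n$ in the two scenarios — e.g.\ by producing, on an event of probability $\ge \delta > 0$, a local environment near $X$ in which $g_1 - g_{-1}$ has a definite sign over a time interval of length $\asymp 2^n$, versus one of the opposite sign; non-triviality of some $g_i$ guarantees such environments exist and occur with positive probability in equilibrium (indeed the contact process in equilibrium visits, with positive density in time, any fixed admissible local pattern near the origin, by ergodicity and positivity of the equilibrium measure on cylinders). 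This gives $\Var(Z_1) \ge c\, 4^n$ for a universal $c > 0$, hence $\alpha^2 = \lim 2^{-2n}\Var(Z_1) \ge c > 0$.

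Concretely I would argue as follows. Fix a finite pattern $\eta$ on $[-r_0, r_0]$ with $g_1(\eta) \neq g_{-1}(\eta)$ (possible since some $g_i \not\equiv 0$ while the other is $\ge 0$, so their difference is not identically zero, or if both vanish identically somewhere one still gets a drift imbalance); by positivity of $\bar\nu$ on cylinders and the mixing of the contact process there is $\delta > 0$ so that, conditionally on the past, with probability $\ge \delta$ the environment seen by the walker agrees with $\theta_{X}\eta$ throughout a time window of length $c_0 2^n$ somewhere in $[0, 2^n]$, during which the walker's conditional drift is a fixed nonzero constant $b = g_1(\eta) - g_{-1}(\eta)$; on the complementary part of that $\ge\delta$-event one can instead arrange a pattern $\eta'$ with $g_1(\eta') - g_{-1}(\eta') = b'$ of the opposite sign (or at least $|b - b'| \ge c_0 > 0$). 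A two-point comparison of the conditional expectation $E(Z_1 \mid \mathcal G)$ on these two sub-events then shows $\Var\big(E(Z_1\mid \mathcal G)\big) \ge c\, 4^n$, whence $\Var(Z_1) \ge c\, 4^n$. Feeding this lower bound into the decomposition of the first paragraph — where $2^{-2n}E\big((X_{2^{2n}} - 2^{2n}\mu)^2\big) = 2^{-2n} R\,\Var(Z_1) + o(1)$ and $2^{-2n}R \to 1$ — forces $\alpha^2 \ge c > 0$, completing the proof.
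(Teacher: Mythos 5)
The decisive gap is in your positivity argument, and it is twofold. First, the bound you aim for, $\Var(Z_1)\ge c\,4^n$ for a block of temporal length of order $2^n$, is not only more than needed but false: by Lemma \ref{basic} (and Lemma \ref{l2bound}) centred increments over time $t$ have second moment $O(t^{3/2})$, so $\Var(Z_1)=O(2^{3n/2})$; and in your own decomposition, with $R\approx 2^n$ blocks, what is actually required is $\Var(Z_1)\ge c\,2^n$, since $2^{-2n}R\,\Var(Z_1)\approx 2^{-n}\Var(Z_1)$. Second, the mechanism you propose cannot deliver even the corrected bound: the event that the environment within distance $r_0$ of the walker agrees with a fixed pattern throughout a time window of length $c_0 2^n$ has probability exponentially small in $2^n$ (every site is struck by death marks at rate $1$), not probability $\ge\delta$; and at the diffusive scale, the mere fact that favourable local patterns occur with positive probability at many times gives no variance lower bound, because the successive conditional drifts are functions of one common, mixing environment and could in principle cancel. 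The paper's proof of Proposition \ref{positif} supplies exactly the missing decoupling: the runs are augmented by a sixth and seventh stage on which, during a unit time interval, $N^X$ has at most one point whose uniform variable permits a $+1$ jump (using that under $\nu(n)$ the event $g_1>b_2$ at the walker's position has probability at least $b_2>0$), the $r_0$-neighbourhood of $X$ is insulated from outside infection arrows during that interval, and the configuration relative to the possibly shifted walker is then re-randomized so that its law is $\nu(n)$ independently of whether the jump occurred. This injects i.i.d.\ Bernoulli randomness roughly $2^{n(1-o(1))}$ times before time $2^n$, giving $E((X_{2^n}-2^n\mu)^2)\ge 2^{7n/8}$ for $n$ large; this sub-linear lower bound suffices because the recursion \eqref{eq-var} has summable errors. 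The non-triviality of the $g_i$ enters there, not through ergodicity of $\bar\nu$ on cylinders.

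The existence part also has gaps as written. The inference from $|a_{n+1}-a_n|=o(1)$ (or $a_{2n}=a_n+o(1)$) to convergence is invalid without a rate; the paper instead proves the quantitative recursion \eqref{eq-var}, whose normalized error $O(n^8 e^{M'\log^4(n)}2^{-n/4})$ is summable. Moreover, in your $n\to 2n$ block decomposition the error after dividing by $2^{2n}$ is not $o(1)$: the cross term $E\bigl(\sum_j Z_j'\cdot F_n'\bigr)$ is controlled by Cauchy--Schwarz only by $\sqrt{C2^{3n}\,E((F_n')^2)}$, which with the available bound $E((F_n')^2)=O(n^{16}e^{2M'\log^4(n)}2^{2n})$ is of order $2^{5n/2}$ up to subexponential factors, i.e.\ of order $2^{n/2}$ after normalization; even the (unavailable, essentially circular) bound $E((F_n')^2)=O(2^n)$ would only yield $O(1)$. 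The paper's route avoids this by comparing scales $2^n$ and $2^{n+1}$ with a single inserted regeneration, centring $Z_1$ so that $E((X_{2^n}-2^n\mu)Z_1)=0$, and controlling the remaining cross terms by Cauchy--Schwarz against the a priori bound of Lemma \ref{basic}, which also gives $|E(Z_1^2)-E((X_{2^n}-2^n\mu)^2)|\le Kn^8e^{M'\log^4(n)}2^{3n/4}$ (your claimed $O(n^8e^{M'\log^4(n)})$ comparison of second moments is likewise too optimistic, though that by itself is not fatal). So the skeleton of your first paragraph can be repaired along the paper's lines, but both the rate and the cross-term control are missing, and the positivity argument needs to be replaced by a genuine decoupling construction.
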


\begin{proof}

\noindent The proof that the limit is strictly positive is given below in Proposition \ref{positif}. 
\noindent
For the existence, we write as before
$X_{2^{n+1}}-2^{n+1} \mu$ as $(X_{2^{n}}-2^{n} \mu ) + Y_{1} + Z_{1} + Y_{2}$, where $Y_{1}$ is the increment
of $X$ over time $[2^{n}, \sigma]$ with $\sigma=\sigma(n,2^n)$  being the time of the regeneration after time $2^n$.
$Z_{1} = X^\prime_{\sigma +2^{n}}- X^\prime_{\sigma} -E (X^\prime_{\sigma +2^{n}} - X^\prime_{\sigma})$ and $Y_{2}$
is defined via the above equality. Then we have
\begin{eqnarray*}
E(( {X_{2^{n+1}}-2^{n+1} \mu)}^{2})&=&E((X_{2^{n}}-2^{n} \mu)^{2})  + E (Z_{1}^{2}) + 2 E ((X_{2^{n}} -2^{n} \mu) Z_{1})\\
&\quad+& E({(Y_{1} + Y_{2})}^{2})+ 2E((Y_{1} + Y_{2}) Z_{1}) \\&\quad+& 2E((Y_{1} + Y_{2}) (X_{2^{n}}-2^{n} \mu)).
\end{eqnarray*}
\noindent By our choice of $Z_{1}$ we have $2E((X_{2^{n}} -2^{n} \mu) Z_{1}) = 0$,
while by Cauchy Schwarz and Lemma \ref{exp} we have, for $n$ large, (and some finite $K$ not depending on $n$)
$$
\vert  E({(Y_{1} + Y_{2})}^{2}) + 2E((Y_{1} + Y_{2}) Z_{1}) + 2E((Y_{1} + Y_{2}) (X_{2^{n}}-2^{n} \mu))\vert  \leq K2^{\frac{3n}{4}}n^{8}e^{M^{\prime}\log^4 (n)}.
$$
\noindent It simply remains to check that (increasing $K$ if necessary)
$\vert E Z_{1}^{2} - E ( {(X_{2^{n}}-2^{n} \mu)}^{2})\vert \leq Kn^{8} e^{M^{\prime}\log^4 (n)}2^{\frac{3n}{4}}$
to see that
\begin{equation}
 \label{eq-var}
\vert E( {(X_{2^{n+1}}-2^{n+1} \mu)}^{2})-2 E ( {(X_{2^{n}}-2^{n} \mu)}^{2})\vert \leq c 2^{\frac{3n}{4}}n^{8}e^{M^{\prime}\log^4 (n)},
\end{equation}
from which we obtain the existence of limit of $2^{-n} E ( {(X_{2^{n}}-2^{n} \mu)}^{2})$.
\end{proof}

\vspace{0.3cm}

\noindent We now adapt the previous argument to give bounds on $E ( {(X_{2^{n+1}}-2^{n+1} \mu)}^{4})$.

\noindent As before we write
$$
X_{2^{n+1}}-2^{n+1} \mu=
X_{2^{n}} -2^{n} \mu + Y_{1} + Z_{1} + Y_{2} = X_{2^{n}}-2^{n} \mu + Z_{1} + Y.
$$
So we can write ${(X_{2^{n+1}}-2^{n+1} \mu)}^{4}$ as
$$
{(X_{2^{n}}-2^{n} \mu)}^{4} + Z_{1}^{4} + 6 {(X_{2^{n}}-2^{n} \mu)}^{2} Z_{1}^{2} + 4 Z_{1}^{3}(X_{2^{n}}-2^{n} \mu)+
4 Z_{1} {(X_{2^{n}}-2^{n} \mu)}^{3} + W,
$$
where the random variable $ W$ is defined by the above equality.
Now $E(Z_{1} {(X_{2^{n}}-2^{n} \mu)}^{3})=0$ and
$E ({(X_{2^{n}}-2^{n} \mu)}^{2}Z_{1}^{2}) = E ({(X_{2^{n}}-2^{n} \mu)}^{2} )E (Z_{1}^{2})=2^{2n} \alpha ^ 4 (1+O(1)) $,
while $ \vert E (Z_{1}^{3}(X_{2^{n}}-2^{n} \mu)) \vert  =
\vert E Z_{1}^{3} \vert  \vert E (X_{2^{n}}-2^{n} \mu)\vert \ \leq \ {(E Z_{1}^{4})}^{\frac{3}{4}}
\vert E (X_{2^{n}}-2^{n} \mu) \vert \leq \ {(E Z_{1}^{4})}^{\frac{3}{4}} K n^8 e^{M^{\prime}\log^4(n)}$. On the
other hand,
$$
E(W)
\ = \ E( Y^4) + 6E( Y^2{V}^{2}) + 4E( Y^3 V) +
 4E(Y V^{3} )
$$
for $V \ = \  X_{2^{n}}-2^{n} \mu + Z_1 $.  Using  Holder's inequality, we see that
$$
E(W)\leq K {(EY^{4})}^{\frac{1}{4}}
\left(  (E (X_{2^n}- {2^n}\mu)^{4})^{\frac{3}{4}} + (EZ_1^4)^{\frac{3}{4}} +  (E Y^4)^{\frac{3}{4}} \right).
$$



\noindent In the same way we have 
$$
E(Z_{1}^{4}) \leq \  E\left({(X_{2^{n}}-2^n\mu)}^{4}\right)  \left( 1 + \frac{K}{2^{n/4}} \right)
$$
for some universal finite $K$. Putting all together and setting $V_n = \frac{E[(X_{2^n}-2^n \mu)^4]}{2^{2n}}$
we see that,
$$V_{n+1} \leq \frac{V_n}{2} (1+\frac{K}{2^{n/4}}) +6\frac{\alpha^4}{4} (1+ O(1))+ 
\frac{{V_n}^{3/4}}{2^{n/2+2}} Kn^8e^{M^{\prime}\log^4(n)}+ \frac{Kn^{32}e^{4M^\prime \log^4(n)}}{2^{2n+2}},$$
so that $V_n$ satisfies the simpler recursion 
$$
V_{n+1} \le \frac{V_n}{2} ( 1 + \frac{K^\prime}{2^{n/4}})+ K^\prime, 
$$
for suitable constant $K^\prime$, and we get

\begin{lemma}
\label{fourthmo}
For the process $X$ and $\mu $ as in Corollary \ref{comu}
$$
\sup_{n} \frac{E{(X_{2^{n}}-2^{n} \mu)}^{4}}{2^{2n}} < \infty.
$$
\end{lemma}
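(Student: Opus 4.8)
The plan is to read off $\sup_n V_n<\infty$ directly from the recursion
$$
V_{n+1}\ \le\ \frac{V_n}{2}\Bigl(1+\frac{K^\prime}{2^{n/4}}\Bigr)+K^\prime,\qquad V_n:=\frac{E{(X_{2^n}-2^n\mu)}^4}{2^{2n}},
$$
which has just been established. I would first observe that each individual $V_n$ is finite: for fixed $n$ we have $|X_{2^n}|\le N^X(2^n)$, a Poisson random variable, so $X_{2^n}$ has moments of every order. Hence it suffices to turn the recursion into a bound uniform in $n$.

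For this, set $a_j:=\tfrac12\bigl(1+K^\prime 2^{-j/4}\bigr)$, so that $V_{n+1}\le a_nV_n+K^\prime$. Because $\sum_{j\ge0}2^{-j/4}<\infty$, the infinite product $P:=\prod_{j\ge0}\bigl(1+K^\prime 2^{-j/4}\bigr)$ is finite, and therefore $\prod_{j=i}^{n}a_j\le P\,2^{-(n-i+1)}$ for every $0\le i\le n$. Iterating the recursion from $n=0$ then gives
$$
V_{n+1}\ \le\ \Bigl(\prod_{j=0}^{n}a_j\Bigr)V_0\ +\ K^\prime\sum_{i=0}^{n}\prod_{j=i+1}^{n}a_j\ \le\ P\,V_0\ +\ K^\prime\sum_{i=0}^{n}P\,2^{-(n-i)}\ \le\ P\,V_0+2PK^\prime,
$$
which is independent of $n$. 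Combined with the finiteness of $V_0$, this yields the claimed $\sup_n V_n<\infty$.

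I do not expect a genuine obstacle at this stage; the only delicate point is the reduction to the simpler recursion above, which I would handle by absorbing the cross term $2^{-(n/2+2)}V_n^{3/4}Kn^8e^{M^\prime\log^4(n)}$ appearing in the full recursion into $\tfrac12 V_n$ plus a bounded constant. This is legitimate via Young's inequality with exponents $4/3$ and $4$: writing that term as $V_n^{3/4}\varepsilon_n$ with $\varepsilon_n=2^{-(n/2+2)}Kn^8e^{M^\prime\log^4(n)}$, one has $V_n^{3/4}\varepsilon_n\le \tfrac14 V_n+C\varepsilon_n^4$, and $\varepsilon_n\to0$ since $2^{n/2}$ dominates $n^8e^{M^\prime\log^4(n)}$, so $\varepsilon_n^4$ is bounded; the remaining additive contributions $6\alpha^4/4\cdot(1+o(1))$ and $2^{-(2n+2)}Kn^{32}e^{4M^\prime\log^4(n)}$ are plainly bounded as well. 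After that, the product estimate of the previous paragraph finishes the proof.
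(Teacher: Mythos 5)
Your argument is correct and follows essentially the paper's own route: the paper derives the same recursion for $V_n$ and then simply asserts that the simplified recursion $V_{n+1}\le \tfrac{V_n}{2}\bigl(1+K^\prime 2^{-n/4}\bigr)+K^\prime$ yields the uniform bound, which is exactly the iteration you carry out (together with the easy but worthwhile remark that each $V_n$ is finite since $|X_{2^n}|\le N^X(2^n)$). The only cosmetic mismatch is that your Young-inequality absorption of the cross term gives a contraction factor of roughly $3/4$ rather than the $1/2$ for which your product estimate is written; since any factor bounded away from $1$ works in the same geometric-product argument, this changes nothing.
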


We now wish to prove that $\alpha $ is strictly positive. 



\begin{proposition} \label{positif}
The constant $\alpha $ defined above is strictly positive.
\end{proposition}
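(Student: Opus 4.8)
The plan is to reduce the claim to the lower bound $\Var(X_{2^n})\ge c\,2^n$ for some $c>0$ and all large $n$; since $2^{-n}\Var(X_{2^n})\to\alpha^2$ by Proposition~\ref{var}, this already gives $\alpha^2\ge c>0$. I would first condition on the portion $H_\xi$ of the Harris system that produces the contact process alone --- not on the Poisson process $N^X$ nor on the uniforms $\{U_i\}$ driving $X$ --- and prove the quenched bound $\Var(X_{2^n}\mid H_\xi)\ge c\,2^n$ with probability tending to one in $H_\xi$; this suffices, since $\Var(X_{2^n})\ge E[\Var(X_{2^n}\mid H_\xi)]$ by the law of total variance. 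Given $H_\xi$ the process $X$ is a genuine space- and time-inhomogeneous nearest-neighbour jump process, Markov in time, with deterministic rates $g_{\pm1}(\theta_{X_t}o\,\xi_t)$.

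Write $\cF_t$ for the natural filtration of $(N^X,\{U_i\})$, fix $T=2^n$, and set $f_s(w):=E[X_T\mid H_\xi,X_s=w]$. By the Markov property $E[X_T\mid H_\xi,\cF_k]=f_k(X_k)$, so
$$
\Var(X_T\mid H_\xi)\ =\ \sum_{k=0}^{T-1}E\bigl[\,\Var\bigl(f_{k+1}(X_{k+1})\mid H_\xi,\cF_k\bigr)\,\big|\,H_\xi\bigr].
$$
Next comes a local non-degeneracy estimate. Non-triviality of $g_i$ gives a cylinder configuration $\zeta_0$, determined on $[-r_0,r_0]$, with $(g_1+g_{-1})(\zeta_0)\ge c_0>0$; and, using the survival and recovery estimates for the supercritical contact process (Lemmas~\ref{lemreg}, \ref{hit}, and the no-gaps Lemma~\ref{nogaps}), one shows that with probability tending to one in $H_\xi$ there is a set of $k\le T$ of positive density such that, on a ``good'' event $G_k\in\sigma(H_\xi)\vee\cF_k$ of conditional probability bounded below given $H_\xi$, the contact process recreates $\zeta_0$ within bounded distance of $X$ during $[k,k+1]$ and $X$ can perform a prescribed short sequence of moves there; in particular, on $G_k$ the conditional law of $X_{k+1}-X_k$ given $\sigma(H_\xi)\vee\cF_k$ charges both $0$ and a fixed integer $D$ (chosen below) with probability bounded below.

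The crux is to show $\Var\bigl(f_{k+1}(X_{k+1})\mid H_\xi,\cF_k\bigr)\ge\delta'>0$ on $G_k$. Coupling two copies of $X$ from neighbouring sites $w,w+1$ through the same $(N^X,\{U_i\})$ and putting $\Delta_s:=X^{(w+1)}_s-X^{(w)}_s$ (a process started from $1$ and absorbed at $0$), the Markov property gives $f_k(w+1)-f_k(w)=E[\Delta_T\mid H_\xi]$. If $g_1-g_{-1}$ is constant then $\Delta$ is a martingale for $(\cF_s)$ given $H_\xi$, so $E[\Delta_T\mid H_\xi]=1$ and $f_{k+1}(w)=w+b(H_\xi)$ on the relevant range, with $b$ independent of $w$; hence on $G_k$,
$$
\Var\bigl(f_{k+1}(X_{k+1})\mid H_\xi,\cF_k\bigr)\ =\ \Var\bigl(X_{k+1}-X_k\mid H_\xi,\cF_k\bigr)\ \ge\ \delta',
$$
and summing over the positive-density family of good $k$ yields $\Var(X_{2^n}\mid H_\xi)\ge c\,2^n$, closing this case. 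In general $\Delta$ carries a bounded drift $(g_1-g_{-1})(\theta_{X^{(w+1)}_s}o\,\xi_s)-(g_1-g_{-1})(\theta_{X^{(w)}_s}o\,\xi_s)$, and one must establish $\sup_{k,w}\lvert f_k(w+1)-f_k(w)-1\rvert\le C<\infty$; then choosing $D=\lceil C\rceil+1$ forces $\lvert f_{k+1}(X_k+D)-f_{k+1}(X_k)\rvert\ge1$ on $G_k$, and the same conclusion follows.

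I expect the uniform bound $\sup_{k,w}\lvert E[\Delta_T\mid H_\xi]-1\rvert\le C$ --- equivalently, the quenched assertion that two neighbouring walks driven by the same auxiliary randomness recouple in the contact environment with an accumulated drift difference that stays bounded --- to be the main obstacle. It should follow by adapting to the pair $(X^{(w)},X^{(w+1)})$ the renormalization and coupling arguments already developed in Sections~\ref{sec-ac}--\ref{sec-reg}, in particular Lemma~\ref{noencroach} and Proposition~\ref{gencontrol}; but keeping the attendant error probabilities under control uniformly over the whole interval $[0,2^n]$ will require some care.
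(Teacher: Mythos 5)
Your reduction (via the law of total variance and the decomposition of the quenched variance into conditional variances of $f_{k+1}(X_{k+1})$) is sound in outline, but the argument hinges on exactly the estimate you defer: the uniform quenched bound $\sup_{k,w}\vert f_k(w+1)-f_k(w)-1\vert\le C$ (equivalently, that two walkers started at neighbouring sites \emph{in the same fixed environment} $H_\xi$, driven by the same $N^X$ and uniforms, recouple with bounded accumulated drift difference, uniformly over $[0,2^n]$ and over starting points). This is a genuine gap, not a routine adaptation: Lemma \ref{noencroach} and Proposition \ref{gencontrol} couple two \emph{environments} seen by a single walker started at a common point (with the configurations agreeing near that point); they say nothing about two walkers at distinct positions in one environment, whose difference process $\Delta_s$ carries an environment-dependent drift of order one per unit time, so that a priori $\vert E[\Delta_T\mid H_\xi]-1\vert$ can be of order $T=2^n$ rather than $O(1)$ unless you prove fast quenched coalescence. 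Moreover the regeneration machinery of Section \ref{sec-reg} is intrinsically annealed — it uses fresh randomness of the Harris system after the regeneration time to produce the law $\nu(n)$ — and is not available once you condition on $H_\xi$. Without the gradient bound your argument closes only in the degenerate case $g_1-g_{-1}$ constant; and even granting a per-step bound with constant $C\ge 1$, it does not force $\vert f_{k+1}(w+D)-f_{k+1}(w)\vert\ge 1$ for $D=\lceil C\rceil+1$ (the increments could be as small as $1-C<0$), so you would in any case need the recoupling estimate directly at separation $D$.

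For comparison, the paper avoids any quenched estimate and any full-order bound $c\,2^n$. It uses the near-doubling relation \eqref{eq-var} from Proposition \ref{var} to reduce the claim to the much weaker statement $E[(X_{2^{n_1}}-2^{n_1}\mu)^2]\ge 2^{n_1(1-\beta)}$ for some $\beta<1/4$ at arbitrarily large scales, and produces this by modifying the regeneration runs of Section \ref{sec-reg} with two extra stages: after a successful run the walker, during a unit time interval, either stays put or advances by one (each with conditional probability bounded below, using non-triviality of $g_1$ and the fact that the environment seen from the walker is close to $\nu(n)$), and an insulation-plus-re-randomization step ensures the environment relative to the new position has law $\nu(n)$ \emph{in either case}, so the conditional future is the same whichever alternative occurred. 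Each successful run therefore injects variance bounded below, roughly $2^{8n/9}$ runs fit into $[0,2^n]$, giving the lower bound $2^{7n/8}$, which the doubling relation then converts into $\alpha>0$. If you want to pursue your route, the honest statement of what remains is a quenched coalescence estimate for two walks in a common contact-process environment, which is a substantially harder result than anything established in the paper.
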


\begin{proof}
In the proof of Proposition \ref{var} (see \eqref{eq-var})  we showed that
$$
2^{n+1} E \left( ( \frac{X_{2^{n+1}}}{2^{n+1}} - \mu)^2  \right) =
2^n E \left( ( \frac{X_{2^{n}}}{2^{n}} - \mu)^2  \right) \ + \ O(n^8e^{M^{\prime}\log^4 (n)}2^{-n/4}) .
$$
Given this, we see at once that it suffices to show that there exists $\beta < 1/4$ so that for each $n_0$ there exists $n_1 \geq n_0 $
so that
$$
E \left( ( X_{2^{n_1}}- 2^{n_1} \mu)^2  \right) \  \geq \ 2^{n_1(1-\beta) }.
$$

To do this we introduce a new regeneration time $\sigma^ \prime $ similar to the regeneration time of order 
$n$ but with two additional stages added into the ``runs". We first choose a $j \in \{-1,1\} $ 
 so that $||g_j ||_\infty \ \ne \ 0$.  Without loss of generality this will be $j=1$.  We define a run beginning at a Markov time $t$ .
If the first five stages are successful then at time $t +n^4 + \log^4(n)   $ the process $\xi $ (relative to $X$) is in approximate equilibrium,
$\nu(n)$ at least close to $X$.
So we have with $b_2(>0)$ probability that $g_1>b_2 $ on the configuration $\xi$ shifted by $X$.

The sixth and seventh stages are motivated by a desire to create a ``regeneration time" $\sigma^ \prime $ so that the distribution of
$\xi $ relative to position $X$ is (essentially) the same irrespective of whether $X$ has advanced by zero or by one during a certain time interval.
This will add uncertainty into the system thus increasing the ``variance".

The primary sixth stage event is that on time interval $[t+n^4+\log^4(n) ,\\t+n^4+\log^4(n)+  1 ]$, we have that either $N^X$ is constant or
increases by one, and the uniform random variable associated to the single Poisson point is in $[1-b_2/M',1]$.

Thus on this event during time interval $[t+n^4+\log^4(n) , t+n^4+\log^4(n)+ 1]$, $X$ either advances by one or stays fixed.
Our task is to show that the process will forget which.

To this end, we also require that on this time interval there is no point in $N^{x,y} $ where one of $\{x,y\}$ is in
$[X(t+n^4+\log^4(n) )-r_0,X(t+n^4+\log^4(n))+r_0] $ and the other is outside.
We also require that $N^X$ is constant on the time interval $[t+n^4+\log^4(n)+ 1, t+n^4+\log^4(n)+  1 + \log^4(n)]$, and that at time
$t+n^4+\log^4(n) + 1$, the process $\xi $ has no $\log^{5/4}(n) $ gaps on the interval
$[X(\tilde t(t,n))-n^9, X(\tilde t(t,n))+n^9]$, where we write (for shortness) $\tilde t(t,n)\stackrel{def}{=}t+n^4+2\log^4(n)+1 $.

Then we define a configuration $\gamma^\prime_{\tilde t(t,n)}$ as with our definition of $\sigma $:\\
\indent

For $|x-X(t+n^4+\log^4(n)  + 1)| \leq n^5 $ we choose $C_x$ to be the condition that at time $\log^4(n)/2 $, the dual
$\hat {\xi}^ {x,\tilde t (t,n)} $
has $h_1 \log^4(n)/2 $ occupied sites in the spatial interval $[ X(t+n^4+\log^4(n) + 1)- n^9, X(t+n^4+\log^4(n)  + 1)+ n^9 ] $.\\

We require that for no $x$ in the above interval do we have $\hat {\xi}^ {x,\tilde t (t,n)} $ survives for time $\log^4(n)/2 $ but
$\gamma^\prime_{\tilde t(t,n)}(x)=0$.


\vspace{0.2cm}


Finally, for the seventh stage  we simply introduce (just as in stage 4 for $\sigma $) an auxiliary uniform random variable $U$. 
We can show via simple arguments that $ \gamma^\prime_{\tilde t(t,n)} =  \xi_{\tilde t(t,n)} $
on $[X(\tilde t(t,n))-n^9, X( \tilde t(t,n))+n^9]$ with probability
$q \ = \ q(\gamma^\prime_{\tilde t (t,n)}, \xi_{ \tilde t (t,n)})$
which will be at least $\frac34$.

The last stage (and hence the ``run") will be a success if this occurs and if $U \leq \frac{3}{4q}$.

Then
relative to $( X(t+n^4+\log^4(n)+    1)) = X(t+n^4+2\log^4(n)) + 1) $ (and independently of
$X(t+n^4+\log^4(n) + 1) - X(t+n^4+\log^4(n)  )$
we have that $\xi_{X(t+n^4+2\log^4(n) +1)}$ has distribution $\nu=\nu(n)$.

As before we produce mostly failures but will with high probability produce a success before time $2^{n/9} $.
We then let the process restart the series of runs and continue. It is the easy to see that for $n$ large
$$
E \left( ( X_{2^{n}}- 2^{n} \mu)^2  \right) \ \geq \ 2^{n7/8 }.
$$
By the first paragraph this concludes the proof.
\end{proof}


We finish this section with a technical result
\begin{lemma}  \label{l2bound}
There exists a constant $c_{13}$ so that for all $n$,
$$
\sup_{t \leq 2^n} E\left( (X_t - t\mu )^2 \right) \ \leq \ c_{13}2^n.
$$
\end{lemma}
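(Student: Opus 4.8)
The plan rests on just two facts proved earlier. First, Proposition \ref{var} says that $2^{-m}E\big((X_{2^m}-2^m\mu)^2\big)$ converges, hence
$$
C_0 \ :=\ \sup_{m\ge 0}\ 2^{-m}\,E\big((X_{2^m}-2^m\mu)^2\big)\ <\ \infty .
$$
Second, since $\xi$ evolves from the invariant measure $\bar\nu$ and the Poisson process $N^X$ together with its uniform marks is homogeneous in time, for every $a\ge 0$ the time--shifted increment process $(X_{a+s}-X_a)_{s\ge 0}$ has the same law as $(X_s)_{s\ge 0}$; in particular, writing $\|\cdot\|_2$ for the $L^2$ norm,
$$
\big\| X_{a+s}-X_a-s\mu \big\|_{2}\ =\ \big\| X_s-s\mu\big\|_2 \qquad (a,s\ge 0).
$$

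I would then introduce $\psi(n):=\sup_{t\le 2^n}\|X_t-t\mu\|_2$, so that the assertion is exactly $\psi(n)^2\le c_{13}2^n$, and establish the one--step bound
$$
\psi(n)\ \le\ \psi(n-1)\ +\ \sqrt{C_0\,2^{\,n-1}}\,.
$$
For $t\le 2^{n-1}$ this is trivial; for $2^{n-1}<t\le 2^n$ one writes
$$
X_t-t\mu\ =\ \big(X_{2^{n-1}}-2^{n-1}\mu\big)\ +\ \big(X_t-X_{2^{n-1}}-(t-2^{n-1})\mu\big)
$$
and applies Minkowski's inequality: the first term has $L^2$ norm at most $(C_0 2^{n-1})^{1/2}$ by definition of $C_0$, and, since $0\le t-2^{n-1}\le 2^{n-1}$, the stationarity identity makes the second term's $L^2$ norm equal to $\|X_{t-2^{n-1}}-(t-2^{n-1})\mu\|_2\le\psi(n-1)$.

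Iterating gives $\psi(n)\le \psi(0)+\sqrt{C_0}\sum_{j=1}^{n}2^{(j-1)/2}\le \psi(0)+\tfrac{\sqrt{C_0}}{\sqrt2-1}\,2^{n/2}$, where $\psi(0)<\infty$ because $|X_t|\le N^X(t)$ gives $\|X_t-t\mu\|_2\le \|N^X(1)\|_2+|\mu|$ for $t\le 1$. Squaring and using $2^n\ge 1$ yields $\sup_{t\le 2^n}E\big((X_t-t\mu)^2\big)\le\psi(n)^2\le c_{13}2^n$ with, e.g., $c_{13}=2\psi(0)^2+\tfrac{2}{(\sqrt2-1)^2}C_0$. (The same estimate gives $\|X_t-t\mu\|_2=O(\sqrt t)$, hence $X_t/t\to\mu$ in $L^2$ and $E(X_t/t)\to\mu$, the remark following Corollary \ref{comu}.)

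The only point needing genuine care — and the one I would actually write out — is the stationarity identity $(X_{a+s}-X_a)_{s\ge 0}\stackrel{d}{=}(X_s)_{s\ge 0}$. It follows from the invariance of $\bar\nu$ and the Markov property, which give $(\xi_{a+s})_{s\ge 0}\stackrel{d}{=}(\xi_s)_{s\ge 0}$ jointly with the time--translated Harris system and $N^X$; from the time--homogeneity of $N^X$ and of its uniform marks; and from the fact that the jump mechanism of $X$ is a fixed, spatially shift--covariant functional of the environment seen from $X$. Given this, the argument uses no regeneration machinery beyond what is already packaged into Proposition \ref{var}. (One could split $[0,t]$ at any point of $(2^{n-1},t]$, or work directly with $t\ge 1$ instead of powers of $2$; the dyadic anchoring is only needed because the good bound $C_0$ is available at the dyadic times, and splitting at $t/2$ would merely produce a ballistic bound.)
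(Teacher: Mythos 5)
Your argument hinges on the ``stationarity identity'' $(X_{a+s}-X_a)_{s\ge 0}\stackrel{d}{=}(X_s)_{s\ge 0}$, and that is exactly where it breaks down. What determines the law of the future increments at time $a$ is the environment seen from the walker, $\theta_{X_a}\circ\xi_a$. It is true that $\xi_a\sim\bar\nu$ for every $a$ (the walk does not affect the contact process), and $\bar\nu$ is translation invariant; but $X_a$ is \emph{not} independent of $\xi_a$ -- the walker's position is built from the history of the environment -- so translation invariance does not give $\theta_{X_a}\circ\xi_a\sim\bar\nu$. In general the environment viewed from the particle drifts away from $\bar\nu$ (e.g.\ if $g_1$ favours jumps off infected sites, the walker oversamples healthy sites), and no invariance of the point-of-view process is known here. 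Note also that if your identity were true, additivity would force $E(X_t)$ to be exactly linear in $t$, and the error terms $O(n^8e^{M'\log^4 n})$ in Lemmas \ref{base} and \ref{exp} would be pointless; the entire regeneration construction of Sections \ref{sec-ac}--\ref{sec-reg} exists precisely because this stationarity is unavailable. So the one step you flag as ``needing genuine care'' is a genuine gap, and your claim that the proof needs ``no regeneration machinery beyond what is packaged into Proposition \ref{var}'' is where the proposal fails.

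The paper's proof keeps your overall shape (reduce a general $t\le 2^n$ to dyadic pieces controlled by Proposition \ref{var}, then Minkowski), but it pays for the missing stationarity with regenerations: it writes $t$ (up to a remainder of order $2^{n/4}$) as $2^{n_1}+\cdots+2^{n_r}$ with $n-1=n_1>\cdots>n_r\ge n/4-1$, inserts an $n_k$-order regeneration time $\sigma_k$ before each block, and decomposes $X_t-t\mu$ into the main increments $Y_i$ over spans $2^{n_i}$ started at regenerations, the in-between pieces $V_i$, and a remainder $Z$. The $V_i$ and $Z$ are bounded crudely (total $O(2^{n/2})$ in $L^2$), while each $Y_i$ starts from the regenerated law $\nu(n_i)$, so Proposition \ref{var} (via Corollary \ref{co-l2bound}-type control) gives $E(Y_i^2)\le 2\alpha^2 2^{n_i}$, and Minkowski over the geometric scales yields the bound $c_{13}2^n$. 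If you want to salvage your recursion ``split at $2^{n-1}$ and recurse on $t-2^{n-1}$'', you would have to restart the second piece at a regeneration time after $2^{n-1}$, control the cost of the regeneration window and of the coupling failure (Lemmas \ref{noencroach}, \ref{nogaps}, Proposition \ref{gencontrol}), and make the induction hypothesis uniform over the initial laws $\nu(m)$ -- at which point you have essentially reconstructed the paper's argument.
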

\begin{proof}
We need only consider $t \in (2^{n-1}, 2^n)$.  If $t \in (2^{n-1}, 2^{n-1}+ 2.2^{n/4})$, it is easy to see that
$E \left( ( X_{t}- t \mu)^2  \right) \leq K2^n $ for universal $K$ so we need only treat $t \in  (2^{n-1}+ 2.2^{n/4},2^n)$.  In this case we can find
$n-1 = n_1 > n_2 > . . . > n_r $ so that $n_r \geq n/4 -1 $ and
$$
t - \sum_{k=1} ^ r 2^{n_k} \ \in \ (2^{n/4}/4 , 22^{n/4} ).
$$
Given these $n_k $ we construct regeneration times $\sigma _ k $ and processes $(\xi^k , X^k)$ in the manner used in the proof
of Lemma \ref{basic} so that \\
\indent
(i)\,process $(\xi^0, X^0)$ is our given dynamic random walk $(\xi, X)$,\\
\indent
(ii)\,for $k \geq 1$, $\sigma_{k} $ is the $n_{k}$ order regeneration for process $(\xi^{k-1}, X^{k-1})$ after $\sigma_{k-1} + 2^{n_{k-1}}$ (taken to be $0$ if $k=1$).

\noindent The resulting process $((\xi^{k-1})^\prime_s, (X^{k-1})^\prime_s)_{s \geq \sigma_k}$ is written $(\xi^{k}, X^{k})$.

We introduce the following notation: \\
$V_i \ = \ X^{i-1}( \sigma_i ) - X^{i-1}(\sigma_{i-1} + 2^{n_{i-1}}) - 
(\sigma_i - \sigma_{i-1}- 2^{n_{i-1}})\mu,\quad i = 1,2, \dots, r$ and \\
$Y_i \ = \ X^i(\sigma_{i} + 2^{n_{i}})  - X^i( \sigma_i )  - (2^{n_i} )\mu,\quad i = 1,2, \dots, r$\\
(where we adopt the usual convention that $\sigma_0 + 2^{n_0} = 0$) and  \\
$Z \ = \ X_t - t \mu - \sum_{k=1} ^ r (V_i + Y_i )$.

It is not necessary in the definition of $Z$ to assume that $\sigma_{n_r}+ 2^{n_r} $ is less than $t$, 
though the probability that it is not will be less than $e^{-c_6n^3/4^3}$ for $n$ large 
(see Proposition \ref{gencontrol}).  Further let $W \ = \ \sum_{k=1} ^ r V_i  .$
Then we have that
\begin{eqnarray*}
E\left( ( X_t - t \mu)^2 \right) = \ E\left( ( \sum_{k=1}^r Y_i +W+Z)^2 \right)\leq
3 E\left( (\sum_{k=1}^r Y_i)^2 \right)  +  3E(W^2)   +  3E(Z^2).
\end{eqnarray*}
It is easily seen that for some $K$ universal $ E(W^2) $ and $E(Z^2) $ are both bounded by $K2^{n/2}$.
For the other part of the bound, recall that by
Proposition \ref{var} we have that for $n$ large  (and hence $n/4-1 $ large), for each $i$
$$
E(Y_i^2) \leq 2 2^{n_i}\alpha ^ 2.
$$
It then follows by Minkowski inequality
$$
 E\left( (\sum_{k=1}^r Y_i)^2 \right) \ \le  2\alpha^2\left( \sum_{i=1} ^{n-1} 2^{i/2}\right)^{2} \ 
 \leq K 2^{n-1}.
$$


\end{proof}

\begin{corollary}
\label{co-l2bound}
The statement in Lemma \ref{l2bound} applies as well for the dynamic random walk $(\xi,X)$ 
assumed to start with $\xi$ distributed as $\nu(n)$ and $X_0=0$. 
\end{corollary}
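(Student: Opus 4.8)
The plan is to reprove Lemma~\ref{l2bound} verbatim for the walk $(\xi,X)$ started from $\xi_0\sim\nu(n)$, $X_0=0$, using the same regeneration decomposition. The point is that in the proof of Lemma~\ref{l2bound} the starting distribution is felt in exactly two places, neither of which is sensitive to the change from equilibrium to $\nu(n)$: (a) through the ``no $n^{3/2}$ gap'' estimate of Lemma~\ref{nogaps}, which underpins the control of disasters and of the first stage of the runs throughout the regeneration apparatus; and (b) through the single increment $V_1=X_{\sigma_1}-\sigma_1\mu$ of $X$ up to the first regeneration time $\sigma_1$. After $\sigma_1$ the process is, relative to its position, $\nu(n_1)$-distributed exactly as after any regeneration in the equilibrium proof, so the decomposition $X_t-t\mu=\sum_i(V_i+Y_i)+Z$ and all the bounds on the $Y_i$, on $W=\sum_i V_i$, and on $Z$ are word for word as there.

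For (a): realise a $\nu(n)$-configuration $\xi_0$ from the Harris system on negative times, conditioned on the positive-probability event $G$ of Section~\ref{sec-ac} (on which, for $|x|\le n^9$, $\xi_0(x)=1$ precisely when the dual $\hat\xi^x$ survives to time $t(n)$), and let $\bar\xi_0$ be the equilibrium configuration built from the same Harris system, $\bar\xi_0(x)=1$ iff $\hat\xi^x$ survives forever. Then $\xi_0\ge\bar\xi_0$ pathwise: off $[-n^9,n^9]$ one has $\xi_0\equiv1$, and on $[-n^9,n^9]$ eternal survival of $\hat\xi^x$ implies survival to $t(n)$. By attractiveness this domination propagates with the forward Harris system, so $\xi_s\ge\bar\xi_s$ for all $s\ge0$, and every event ``$\xi$ has an $n^{3/2}$ gap at some point of the relevant space--time box'' is contained in the corresponding event for the genuine equilibrium process $\bar\xi$; hence Lemma~\ref{nogaps} (and its $[0,2^{Kn}]$ variant invoked inside the runs) holds verbatim for the $\nu(n)$-started walk. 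The same observation applies at every regeneration of the chain $(\xi^i,X^i)$: a freshly inserted $\nu(n)$-configuration also stochastically dominates an equilibrium configuration, which can be coupled below it (Strassen) and run forward with the common Harris system, so the no-gap control the regeneration machinery needs is available at every stage---exactly as it already is in the equilibrium proof, where every post-regeneration configuration is $\nu(\cdot)$-distributed.

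For (b): $V_1$ is already a summand of $W$ in the proof of Lemma~\ref{l2bound}, and the bound used there, $E(V_1^2)=O\big(n^{16}e^{2M^\prime\log^4 n}\big)$, follows only from Proposition~\ref{gencontrol} (giving $\sigma_1\le n^8 e^{M^\prime\log^4 n}$ off probability $e^{-c_6 n^3}$) together with elementary Poisson tail estimates for $N^X$, and is uniform over the starting configuration; since $\log^4 n\ll n$ this is $o(2^{n/2})$. The remaining ingredients are then imported unchanged: $E(Y_i^2)\le 2\cdot 2^{n_i}\alpha^2$ for $n_i$ large, which follows from (the proof of) Proposition~\ref{var} together with Lemma~\ref{base}, so that $E\big((\sum_i Y_i)^2\big)\le K2^n$ by Minkowski; $E(W^2)$ and $E(Z^2)$ are $O(2^{n/2})$; and the ``bad'' event on which $X_t-t\mu$ fails to equal $\sum_i(V_i+Y_i)+Z$ has probability $\le e^{-cn^{3/2}}$ (by the transferred Lemma~\ref{nogaps}, Lemma~\ref{noencroach}, and Proposition~\ref{gencontrol}), so a Cauchy--Schwarz estimate against the crude bound $|X_t-t\mu|\le N^X(t)+t|\mu|$ makes its contribution $o(2^n)$. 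Summing gives $\sup_{t\le 2^n}E^{\nu(n)}\big((X_t-t\mu)^2\big)\le c\,2^n$ for $n$ large, the constant chosen large enough at the finitely many initial scales.

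The step requiring the most care---and the reason one cannot simply couple the $\nu(n)$-walk directly to the equilibrium walk and estimate the discrepancy---is the bookkeeping in (a). A $\nu(n)$-configuration agrees with the equilibrium configuration on $[-n^9,n^9]$ only off an event of probability of order $e^{-c\log^4 n}$ (by Proposition~\ref{cpinterval}(ii) and a union bound over the $O(n^9)$ sites); because $\log^4 n\ll n$, this is enormous compared with $2^{-n}$, so $E[(X_t-t\mu)^2;\text{disagree}]$ estimated by Cauchy--Schwarz against any polynomial-in-$2^n$ fourth moment blows up rather than being $o(2^n)$. The discrepancy must therefore be absorbed inside the regeneration machinery, which ``speaks $\nu(n)$'' natively and, as the argument above shows, only ever feels the starting configuration through the genuinely negligible quantities (a) and (b). Finally, Corollary~\ref{co-l2bound} records exactly this conclusion.
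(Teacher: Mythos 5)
Your proposal is correct and follows essentially the same route as the paper, whose proof simply observes that the argument for Lemma \ref{l2bound} goes through verbatim with the only change being the initial process $(\xi^0,X^0)$ in the first step. Your additional justifications (the pathwise domination of the equilibrium configuration by a $\nu(n)$-configuration built from the same Harris system, so that the no-gap and regeneration estimates transfer, and the uniform control of the increment up to the first regeneration) are exactly the details the paper leaves implicit.
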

\begin{proof}
Indeed it remains to notice that the same proof works, while the only difference regards 
the process $(\xi^{0},X^{0})$ in the first step of the above proof.  
\end{proof}

\begin{proposition}
\label{during}
\noindent Consider a process ${(X_{\sigma +t} -X_{\sigma} - t \mu)}_{t \leq 2^{n}}$ where $\sigma$ is an $n$
order regeneration time. For each $\gamma > 0$, there exists $c_{\gamma}>0$ so that for all $n$ large

\begin{eqnarray*}
P(\sup_{s \leq 2^{n}}\vert X_{\sigma + s} - X_{\sigma} -s \mu \vert \geq 2^{\frac{n(1 + \gamma)}{2}} )\\
\leq\ 2 P( \vert X_{\sigma +2^{n}} - X_{\sigma} -2^{n} \mu \vert
\geq \frac{1}{2} 2^{\frac{n}{2}(1 + \gamma)} ) \ \leq \ \frac{1}{c_\gamma} 2^{-n \gamma}.
\end{eqnarray*}
\end{proposition}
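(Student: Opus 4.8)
The plan is to combine a Chebyshev bound for the endpoint with an Ottaviani-type first-exit argument for the running maximum. Throughout write $\beta = 2^{n(1+\gamma)/2}$ and $Y = X_{\sigma+2^{n}} - X_{\sigma} - 2^{n}\mu$.

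\emph{Step 1: the endpoint estimate.} I would first record that $E(Y^{2}) \le 2 c_{13}2^{n}$ for $n$ large. At the regeneration time $\sigma$ the regenerated configuration $\xi'_{\sigma}$ has distribution $\nu(n)$ relative to $X_{\sigma}$, so by Corollary \ref{co-l2bound} the regenerated walk $X'$ satisfies $E\big((X'_{\sigma+2^{n}}-X'_{\sigma}-2^{n}\mu)^{2}\big)\le c_{13}2^{n}$; off the ``disaster'' event at $\sigma$ — of probability at most $e^{-cn^{3/2}}$ by Lemmas \ref{noencroach}, \ref{nogaps} and Proposition \ref{gencontrol} — one has $X=X'$ on $[\sigma,\sigma+2^{n}]$, while the contribution of the disaster event is handled by Cauchy--Schwarz together with the crude bound $|Y|\le N^{X}(\sigma+2^{n})-N^{X}(\sigma)+2^{n}|\mu|$, which has all moments. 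Chebyshev then gives $P(|Y|\ge\beta/2)\le 4E(Y^{2})/\beta^{2}\le 8c_{13}2^{-n\gamma}$, which already contains the rightmost inequality of the statement.

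\emph{Step 2: reduction to a uniform increment bound.} Let $\tau=\inf\{s\le 2^{n}:|X_{\sigma+s}-X_{\sigma}-s\mu|\ge\beta\}$, a stopping time, so the event in the Proposition is $\{\tau\le 2^{n}\}$. Since the jumps of $X$ have size one, on $\{\tau\le 2^{n}\}$ we have $|X_{\sigma+\tau}-X_{\sigma}-\tau\mu|\in[\beta,\beta+1]$, hence if also $|X_{\sigma+2^{n}}-X_{\sigma+\tau}-(2^{n}-\tau)\mu|<\beta/2$ then $|Y|\ge\beta/2$. Therefore
$$P(\tau\le 2^{n}) \le P(|Y|\ge\beta/2) + P\big(\tau\le 2^{n},\ |X_{\sigma+2^{n}}-X_{\sigma+\tau}-(2^{n}-\tau)\mu|\ge\beta/2\big),$$
and by the strong Markov property at $\sigma+\tau$ the last probability is $E\big(\mathds{1}_{\{\tau\le 2^{n}\}}\,g(\xi_{\sigma+\tau},2^{n}-\tau)\big)$, where $g(\eta,u)=P_{(\eta,0)}(|X_{u}-u\mu|\ge\beta/2)$ denotes the probability for the dynamic random walk started from environment $\eta$ at the origin. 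It remains to show $g(\eta,u)\le 1/2$ for all $u\le 2^{n}$ and all $\eta$ having no $n^{3/2}$ gap within $2^{4n}$ of the origin, for $n$ large, and that $P\big(\tau\le 2^{n},\ \xi_{\sigma+\tau}$ has an $n^{3/2}$ gap near $X_{\sigma+\tau}\big)$ is negligible; the latter follows from Lemma \ref{nogaps} once one notes that $\sigma\le n^{8}e^{M'\log^{4}n}$ and $|X_{\sigma+\tau}|\le 2^{2n}$ outside probability $e^{-c_{6}n^{3}}+e^{-c2^{n}}$.

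\emph{Step 3: the uniform increment bound via a second regeneration.} For the walk started from such an $\eta$, let $\rho=\sigma(n,0)$ be its $n$-order regeneration time; by Proposition \ref{gencontrol} one has $\rho\le n^{8}e^{M'\log^{4}n}$ outside probability $e^{-c_{6}n^{3}}$, and off a disaster (probability $\le e^{-cn^{3/2}}$, using the no-gap hypothesis on $\eta$, Lemmas \ref{noencroach} and \ref{nogaps}) the walk agrees with the regenerated one, whose configuration at $\rho$ is $\nu(n)$ relative to its position. Writing $X_{u}-u\mu=(X_{\rho}-\rho\mu)+(X_{u}-X_{\rho}-(u-\rho)\mu)$, the first term is $O(n^{8}e^{M'\log^{4}n})$ off $\{\rho>n^{8}e^{M'\log^{4}n}\}$, hence below $\beta/4$ with probability $1-o(1)$ by a Poisson tail estimate; and by Corollary \ref{co-l2bound} the conditional second moment of the second term given $\mathcal{F}_{\rho}$ is $\le c_{13}2^{n}$ (treating the edge cases $u-\rho<0$ and $0<u-\rho-2^{n}$ small separately, by splitting off a residual increment over a time of length at most $n^{8}e^{M'\log^{4}n}$), so it exceeds $\beta/4$ with conditional probability $\le 16c_{13}2^{-n\gamma}$. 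Collecting these bounds gives $g(\eta,u)\le 16c_{13}2^{-n\gamma}+o(1)<1/2$ for $n$ large, uniformly in $\eta$ and $u\le 2^{n}$. Substituting into Step 2 yields $P(\tau\le 2^{n})\le P(|Y|\ge\beta/2)+\tfrac12P(\tau\le 2^{n})+e^{-c_{9}n^{3/2}}$, i.e. $P(\tau\le 2^{n})\le 2P(|Y|\ge\beta/2)+2e^{-c_{9}n^{3/2}}$; combined with Step 1 and absorbing the negligible $2e^{-c_{9}n^{3/2}}$, this gives $P(\tau\le 2^{n})\le\frac{1}{c_{\gamma}}2^{-n\gamma}$ for a suitable $c_{\gamma}>0$ and $n$ large.

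I expect the main obstacle to be precisely the uniform estimate $g(\eta,u)\le 1/2$ of Step 3: at the first-exit time $\tau$ the environment is a generic (though gap-free) configuration, for which no $L^{2}$ control is available directly, so one is forced to run a further regeneration and invoke the coupling Lemma \ref{noencroach} to transfer the $L^{2}$ bound of Corollary \ref{co-l2bound} back to the $\eta$-started walk. The geometric first-exit decomposition and the Chebyshev step are then routine.
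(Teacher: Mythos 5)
Your proposal is correct and follows essentially the same route as the paper: the paper likewise stops at the first exit time, starts a fresh $n$-order regeneration there, and uses Corollary \ref{co-l2bound} together with Chebyshev to show the post-regeneration increment stays below a fraction of the threshold, which yields the factor $2$ and the final $2^{-n\gamma}$ bound. The only difference is packaging: where you prove a uniform bound $g(\eta,u)\le 1/2$ over gap-free environments via the strong Markov property, the paper conditions directly on the information up to the exit time and controls the negligible coupling and gap events with the same Lemmas \ref{nogaps}, \ref{noencroach} and Proposition \ref{gencontrol}.
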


\begin{proof}
Let $T=\inf\{s >0\colon \vert X_{\sigma + s} - X_{\sigma} -s \mu \vert \geq 2^{\frac{n(1 + \gamma)}{2}}\} 
\wedge 2^n$. 
This is a stopping time for the Harris system filtration.  If $2^n -T < n^9e^{M\log^4(n)}$, then there is 
hardly anything to prove and so we suppose otherwise.
At time $T$ we begin runs concluding in an $n$ order regeneration $\sigma $.  We
put $Z = X^\prime_{2^n}- X^\prime_{\sigma}- (2^n -\sigma ) \mu$ and define random variable $W$ by
$$
X_{2^n} - 2^n \mu \ = \ (X_T -T \mu ) \ + \ ((X_\sigma - X_T) - (\sigma -T) \mu) \ + \  Z \ + \ W,
$$
so
$$
| X_{2^n} - 2^n \mu | \ \geq  \ | X_T -T \mu |  \ - \  | (X_\sigma - X_T) - (\sigma -T) \mu |  \ - \  | Z |  \ - \ | W| ,
$$
$$
 \geq  \ 2^{\frac{n(1 + \gamma)}{2}}  \ - \  | (X_\sigma - X_T) - (\sigma -T) \mu |  \ - \  | Z |  \ - \ | W|.
$$
By elementary bounds on regeneration times and Poisson process tail probabilities
and Proposition \ref{gencontrol}, we have that outside probability $2e^{-c_6n^3}$ for $n $ large
$$
|(X_\sigma - X_T) - (\sigma -T) \mu | \ \leq \ Mn^9e^{M^{\prime}\log^4(n)} \ << 2^{\frac{n(1 + \gamma)}{2}}.
$$

Secondly,  given information up to $ T $, the term $Z$ is equal in distribution to $X_s-s \mu $ 
for $s = 2^n - \sigma $ where the $(\xi,X)$ process begins with $\xi$ in distribution $\nu(n)$ at least when 
restricted to the sites within $n^9 $ of $X_0=0$. We may then apply Corollary \ref{co-l2bound} to see that 
for suitable universal constant $K$  
\begin{equation}
\label{14out14}
P\left( |Z| \geq 2^{\frac{n(1 + \gamma)}{2}} / 4 \right)\ \leq \ K2^{-n \gamma}.
\end{equation}
Thus we obtain (at least for large $n$), using the usual bounds as before for the probability that the random variable $W$ is zero,
\begin{eqnarray*}
P(\sup_{s \leq 2^{n}}\vert X_{\sigma + s} - X_{\sigma} -s \mu \vert \geq 2^{\frac{n(1 + \gamma)}{2}}  )\\
\le 2P (|X_{2^n} - 2^n \mu | \ \geq \ 2^{\frac{n(1 + \gamma)}{2}} / 4),
\end{eqnarray*}
and we are done.
\end{proof}

\section{Proof of Theorem \ref{thm1}}

\noindent Given this we can establish our invariance principle. We consider $2^{n} \leq t< 2^{n+1}$ 
and choose scale $2^{\frac{n}{2} (1 + \beta ) }  \ = \ 2^{n_{1}}$ for $0 < \beta < 1$.
We can apply Proposition \ref{gencontrol} to show that if we define $n_1$ order regeneration times 
$\sigma_k $ recursively, as with the proof of Lemma \ref{basic}, so that
$\sigma_k $ is the time of the first regeneration for process $(\xi^{k-1}, X^{k-1})$ after starting 
runs at time $\sigma_{k-1} + 2^{n_1} $, and
$$
 (\xi^{k}_{\sigma_k}, X^{k}_{\sigma_k})   \ = \ (\xi^{k-1}, X^{k-1})^\prime
$$
then we have with high probability that for all $\sigma_k < t $ that
$$
X_{\sigma_k + 2^{n_1} } - X_{\sigma_k} \ = \ X ^ {k}_{\sigma_k + 2^{n_1} } - X^k_{\sigma_k }.
$$

We decompose the motion ${(X_{s})}_{s \leq t}$ into its increments over an alternating series of intervals
$I_{1}, J_{1}, I_{2}, J_{2},\cdots$, where $I_{k} = [\sigma_{k-{1}}+2^{n_{1}}, \sigma_{k}]$ for $n_{1}$ 
regeneration times $\sigma_k$, and $J_{k} = [\sigma_{k}, \sigma_{k}+2^{n_{1}}]$.

\noindent We have that $\sigma_{k} \geq t$ for $k=k_{0} = \left[ \frac{t}{2^{n_{1}}} \right]$ and 
(outside very small probability) $\sigma_{k} < t$ for $k = k_{1} = \left[ \frac{t}
{ 2^{n_{1}}+  n^8e^{M\log^4(n)}} \right]$.

\noindent Thus via the usual invariance principle and Berry Esseen bounds (see e.g. \cite{Du}) we have 

(A)$\sum_{k=1}^{k_{1}} \frac{(X_{\sigma_{k} + 2^{n_{1}}}- X_{\sigma_{k}}- \mu 2^{n_{1}}    )}{\sqrt{t}}
\stackrel{D}{\to} N(0, \alpha^{2})$,


(B)$\sum_{k=1}^{k_{0}} \frac{(X_{\sigma_{k} + 2^{n_{1}}}- X_{\sigma_{k}}- \mu 2^{n_{1}}    )}{\sqrt{t}}
\stackrel{D}{\to} N(0, \alpha^{2})$,

and

(C) $W_t\stackrel{def}{=}\sup_{k_{1} \leq k' \leq k_{0}} \left \vert  \sum^{k'}_{k=k_{1}} \frac{(X_{\sigma_{k}
+ 2^{n_{1}}}- X_{\sigma_{k}} -2^{n_{1}} \mu)}{\sqrt{t}}   \right \vert \stackrel{p{r}}{\to} 0$.

\noindent Furthermore we have that with probability tending to $1$ at $n \to \infty$
(with the usual convention for $\sigma_{0} +2^{n_{1}}$):

$$W^{1}_{t}\stackrel{def}{=}\sum_{k=1}^{k_{0}} \left \vert \frac{(X_{\sigma_{k}}-
X_{\sigma_{k-1}+2^{k}} -(\sigma_{k} - \sigma_{k-1} + 2^{n_{1}}) \mu}{\sqrt{t}}   \right \vert
\leq K  n^8e^{M^\prime\log^4(n)} \frac{2^{n(1-\frac{\beta}{2})}}{2^{\frac{n}{2}}},
$$
which then tends to zero as $n \to \infty$.

\noindent Furthermore by Proposition \ref{during} we have for
$k_{2} = \sup \{k: \sigma_{k} < t \}$, that with probability that tends to one as $n \to \infty$ 

$$W^{2}_{t}\stackrel{def}{=}\sup_{\sigma_{k_{2}} \leq s \leq \sigma_{k_{2}}+ 2^{n_{1}}}
\left \vert \frac{(X_s - X_{\sigma_{k_{2}}} -(s- \sigma_{k_{2}} ) \mu}{\sqrt{t}} \right \vert
\leq \frac{2^{ \frac{n(1+ \beta)(1+ \gamma)}{4}    }}{2^{\frac{n}{2}}} < 2^{-n \epsilon }
$$
for $\frac{(1+ \beta)(1+ \gamma)}{2} < 1$ and $ \epsilon = 1- \frac{(1+ \beta)(1+ \gamma)}{2}$.

\noindent Then we have
$$\left \vert \frac{X_{t} - t \mu}{\sqrt{t}}- \sum^{k_1}_{k=1}
\frac{(X_{\sigma_{k} + 2^{n_{1}}}- X_{\sigma_{k}} -2^{n_{1}} \mu )}{\sqrt{t}} \right \vert
\leq W_{t} + W_{t}^{1} + W^{2}_{t},
$$
and the desired convergence follows.\qed

 \vspace {1 cm}

\noindent Acknowledgement: Research partially supported by CNPq 402215/2012-5. \\M.E.V. partially supported by CNPq 304217/2011-5.



\begin{thebibliography}{99}







\bibitem[Du1]{Du1} R.\ Durrett. Oriented percolation in two dimensions. \emph{Ann. Probab.}  \textbf{12},
999-1040 (1984).

\bibitem[Du]{Du} R.\ Durrett, \textit{Probability: theory and examples} (4th ed.). Cambridge University Press (2010).

\bibitem[DG]{DG} R.\ Durrett, D. Griffeath.  Supercritical contact processes on Z. \emph {Ann. Probab.} \textbf{11},
1-15 (1983).

\bibitem[dHS]{dHS} F.\ den Hollander, R.\ dos Santos. Scaling of a random walk on a
supercritical contact process. Preprint.

\bibitem[dHSS]{dHSS} F.\ den Hollander, R.\ dos Santos, V.\ Sidoravicius. Law of large numbers for non-elliptic random walks in dynamic random environments.


\bibitem[DSch]{DSch} R.\ Durrett, R. H.\ Schonmann. Large deviations for the contact process and two-dimensional percolation. \emph{Probab. Theory Rel. Fields}
 \textbf {77}, no. 4, 583-603 (1988).


\bibitem[GP]{GP} A.\ Galves, E.\ Presutti. Edge fluctuations for the one-dimensional supercritical contact process. Ann. Probab. 15 (1987), no. 3, 1131–1145.

\bibitem[K]{K}  T.\ Kuczek.  The Central Limit Theorem for the Right Edge of Supercritical Oriented Percolation.
Ann. Probab. \textbf{17}, 1322-1332  (1989).

\bibitem[Li1]{lig85} T. Liggett, \emph{Interacting particle systems}. Grundlehren der mathematischen Wissenschaften \textbf{276}, Springer (1985).











\end{thebibliography}
\end{document}